\numberwithin{equation}{section}
\newtheorem*{theorem*}{Theorem \ref{thm:thompson-intro}}
\theoremstyle{plain}
\newtheorem{thm}{Theorem}[section]
\newtheorem{prop}[thm]{Proposition}
\newtheorem{lem}[thm]{Lemma}
\theoremstyle{definition}
\newtheorem{defn}[thm]{Definition}
\newtheorem{rem}[thm]{Remark}
\newcommand{\bC}{{\mathbb{C}}}
\newcommand{\bR}{{\mathbb{R}}}
\newcommand{\A}{{\mathcal{A}}}
\newcommand{\B}{{\mathcal{B}}}
\newcommand{\F}{{\mathcal{F}}}
\newcommand{\M}{{\mathcal{M}}}
\newcommand{\N}{{\mathcal{N}}}
\renewcommand{\O}{{\mathcal{O}}}
\newcommand{\U}{{\mathcal{U}}}
\newcommand{\X}{{\mathcal{X}}}
\newcommand{\Y}{{\mathcal{Y}}}
\newcommand{\Z}{{\mathcal{Z}}}
\renewcommand{\phi}{\varphi}
\newcommand{\re}{\operatorname{Re}}
\newcommand{\II}[1]{II$_1$}
\newcommand{\submaj}{\prec_w}
\newcommand{\maj}{\prec}
\newcommand{\Proj}{\operatorname{Proj}}
\DeclareMathOperator*{\sotlim}{\operatorname{s-lim}}
\begin{document}
\title[Thompson's theorem for $II_1$ factors]{Thompson's theorem for $II_1$ factors}

\author[M. Kennedy]{Matthew Kennedy}
\address{School of Mathematics and Statistics\\ Carleton University\\
Ottawa, Ontario \; K1S 5B6 \\Canada}
\email{mkennedy@math.carleton.ca}

\author[P. Skoufranis]{Paul Skoufranis}
\address{Department of Mathematics\\UCLA\\
Los Angeles, California\; 90095 \\ USA}
\email{pskoufra@math.ucla.edu}

\begin{abstract}
A theorem of Thompson provides a non-self-adjoint variant of the classical Schur-Horn theorem by characterizing the possible diagonal values of a matrix with given singular values. We prove an analogue of Thompson's theorem for \II1 factors.
\end{abstract}

\subjclass[2010]{Primary 46L10; Secondary 15A42}
\keywords{Schur-Horn, Thompson, von Neumann algebra, MASA, singular values, eigenvalues, diagonal, conditional expectation}
\thanks{First author partially supported by a research grant from NSERC (Canada). Second author partially supported by a research grant from the NSF (USA)}
\maketitle

\section{Introduction}
The following classical theorem of Schur \cite{S1923} and Horn \cite{H1954} characterizes the possible diagonal entries of a self-adjoint matrix with given eigenvalues.

\begin{thm}[Schur-Horn] \label{thm:schur-horn}
Let $\lambda,\alpha \in \bR^n$ be vectors with $\lambda_1 \geq \cdots \geq \lambda_n$ and $\alpha_1 \geq \cdots \geq \alpha_n$. There is an $n \times n$ self-adjoint matrix with eigenvalues $\lambda$ and diagonal $\alpha$ if and only if
\begin{align}
\sum_{j = 1}^k \alpha_j &\leq \sum_{j = 1}^k \lambda_j, \quad k = 1,\ldots,n-1,\ \text{and}\label{eq:thm:schur-horn-1} \\
\sum_{j = 1}^n \alpha_j &= \sum_{j = 1}^n \lambda_j. \notag
\end{align}
\end{thm}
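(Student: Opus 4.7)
The plan is to prove the two directions of the equivalence separately, following in spirit the classical arguments of Schur and Horn. For the necessity direction, I would begin by diagonalizing $A$ via the spectral theorem as $A = U^* \Lambda U$, where $\Lambda = \operatorname{diag}(\lambda_1,\ldots,\lambda_n)$ and $U$ is unitary. A direct computation gives $\alpha_i = A_{ii} = \sum_{j=1}^n |U_{ji}|^2 \lambda_j$, so $\alpha = S\lambda$ where $S$ is the matrix with entries $S_{ij} = |U_{ji}|^2$. Since $U$ is unitary, $S$ is doubly stochastic, and both the partial-sum inequalities \eqref{eq:thm:schur-horn-1} and the trace equality then follow from the Hardy-Littlewood-Polya fact that applying a doubly stochastic matrix to a decreasing sequence never increases any of its partial sums while preserving the total sum. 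An equivalent route is to invoke the variational formula $\sum_{j=1}^k \lambda_j = \max_{P}\operatorname{tr}(PA)$ over rank-$k$ projections $P$, applied to the projection onto the span of the first $k$ standard basis vectors.

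For the sufficiency direction, my plan is induction on $n$, implemented by a sequence of $2\times 2$ unitary rotations (so-called $T$-transforms). Starting from the diagonal matrix $\Lambda$, the goal is to show that a plane rotation in some pair of coordinates $(i,j)$ can be chosen so that after conjugation one prescribed entry $\alpha_k$ appears exactly on the diagonal, the paired diagonal entry is updated accordingly, and the induced $(n-1)\times(n-1)$ principal submatrix on the complementary indices satisfies the analogous majorization conditions with respect to the remaining targets. The induction hypothesis then completes the construction. An alternative route is the convexity approach: combine Birkhoff's theorem, which writes any doubly stochastic matrix as a convex combination of permutation matrices, with the observation that the set of diagonals of $\{U^*\Lambda U : U\text{ unitary}\}$ is a convex polytope whose extreme points are the permutations of $\lambda$.

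The main obstacle is the induction step in the sufficiency direction. Selecting the correct pair $(i,j)$ and the correct angle so that simultaneously (i) one diagonal entry lands exactly on its prescribed value, and (ii) the reduced $(n-1)$-dimensional data still obeys the partial-sum inequalities, is the technical crux of the argument. Once this selection lemma is established, the base case $n = 1$ is trivial and the unwinding of the induction is routine, so essentially the entire difficulty of the theorem is concentrated in verifying this one combinatorial reduction step.
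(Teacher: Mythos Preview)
The paper does not prove Theorem~\ref{thm:schur-horn}; it is stated in the introduction as the classical Schur--Horn theorem, with references to \cite{S1923} and \cite{H1954}, and serves only as background motivation for the \II1 factor results. There is therefore no proof in the paper to compare your proposal against.

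For what it is worth, your outline is the standard classical argument: necessity via the doubly stochastic matrix $S_{ij}=|U_{ji}|^2$ (Schur's half), and sufficiency via induction using $2\times 2$ rotations / $T$-transforms (Horn's half), with Birkhoff's theorem as an alternative. The description is accurate and the identification of the induction step as the technical crux is correct.
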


Mirsky \cite{M1964} asked for a variation on the Schur-Horn theorem that did not require the matrix to be self-adjoint. Specifically he asked for a characterization of the possible diagonal entries of a matrix with given singular values. This problem was eventually solved by Thompson \cite{T1977}*{Theorem 1} who proved the following result (see also \cite{S1976}).

\begin{thm}[Thompson] \label{thm:thompson}
Let $\sigma \in \bR^n$ and $\alpha \in \bC^n$ be vectors with $\sigma_1 \geq \cdots \geq \sigma_n \geq 0$ and $|\alpha_1| \geq \cdots \geq |\alpha_n|$. There is an $n \times n$ matrix with singular values $\sigma$ and diagonal $\alpha$ if and only if
\begin{subequations}
\begin{align}
\sum_{j= 1}^k |\alpha_j| &\leq \sum_{j = 1}^k \sigma_j, \quad k = 1,\ldots,n,\  \text{and}\label{eq:thm:thompson-1} \\
\sum_{j=1}^{n-1} |\alpha_j| - |\alpha_n| &\leq  \sum_{j = 1}^{n-1} \sigma_j - \sigma_n.  \label{eq:thm:thompson-2}
\end{align}
\end{subequations}
\end{thm}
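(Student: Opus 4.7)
The proof would proceed in two parts: necessity of the inequalities, and sufficiency via explicit construction.

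For necessity, I would first observe that \eqref{eq:thm:thompson-1} is equivalent to the weak majorization $|\alpha| \submaj \sigma$. Given $A$ with singular values $\sigma$ and diagonal $\alpha$, for each $k$ one chooses a diagonal unitary $D$ with $D_{jj}\alpha_j = |\alpha_j|$ for $j \leq k$, and then estimates
\[
\sum_{j=1}^k |\alpha_j| = \operatorname{tr}(P_k D A P_k) \leq \|P_k D A P_k\|_1 \leq \sum_{j=1}^k \sigma_j(A),
\]
where $P_k$ projects onto the span of $e_1, \ldots, e_k$, using Ky Fan's trace norm inequality together with the standard fact that compressions do not increase singular values. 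The inequality \eqref{eq:thm:thompson-2} is not implied by weak majorization alone and would require a sharper argument: I would apply a related strategy to $DA$ for a diagonal unitary $D$ that reverses the sign at the $n$-th coordinate, so that the trace of $DA$ equals the left-hand side of \eqref{eq:thm:thompson-2}; the bound then would follow from a refined Ky Fan-type trace estimate that exploits the position of the smallest singular value relative to the smallest diagonal entry.

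For sufficiency, I would proceed by induction on $n$. The case $n = 1$ is trivial, since \eqref{eq:thm:thompson-1} and \eqref{eq:thm:thompson-2} together force $|\alpha_1| = \sigma_1$. The case $n = 2$ can be handled by an explicit parametrization of $2 \times 2$ unitary conjugates of $\operatorname{diag}(\sigma_1, \sigma_2)$, verifying directly that \eqref{eq:thm:thompson-1} and \eqref{eq:thm:thompson-2} precisely characterize the admissible diagonals. For the inductive step at $n \geq 3$, the plan is to extract a $2 \times 2$ block absorbing the smallest singular value $\sigma_n$ and the smallest diagonal value $\alpha_n$, paired with a suitable larger index, in such a way that the remaining $(n-1) \times (n-1)$ data still satisfies the Thompson inequalities; the inductive hypothesis then completes the construction.

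The main obstacle will be the sufficiency direction, and in particular the verification that the reduction preserves \eqref{eq:thm:thompson-2}, which is the tightest of the Thompson constraints. Choosing the correct index to pair with $\sigma_n$ will require a case analysis based on the slack in \eqref{eq:thm:thompson-1} and \eqref{eq:thm:thompson-2}: a naive pairing (for instance, always with $\sigma_{n-1}$) generally fails to preserve the tight constraint at size $n-1$, so the pairing must be adjusted according to whether the weak majorization or the extra inequality is closer to equality.
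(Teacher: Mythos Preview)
The paper does not prove Theorem~\ref{thm:thompson}. It is stated in the introduction as a classical result of Thompson, with a citation to \cite{T1977}*{Theorem 1} (and \cite{S1976}), and serves purely as motivation for the paper's main result, the \II1-factor analogue in Theorem~\ref{thm:thompson-intro}. There is therefore no ``paper's own proof'' against which to compare your proposal.

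That said, a brief comment on your sketch as a stand-alone argument: the overall shape---necessity via Ky Fan-type trace inequalities, sufficiency via induction with a $2\times 2$ block reduction---is exactly the strategy of Thompson's original paper \cite{T1977}. Your account of necessity for \eqref{eq:thm:thompson-1} is correct. For \eqref{eq:thm:thompson-2}, however, your description is too vague to be a proof: saying that the bound ``would follow from a refined Ky Fan-type trace estimate'' is a placeholder, not an argument. The actual mechanism is that for any matrix $B$ one has $|\operatorname{tr} B| \leq \sum_{j=1}^{n-1} \sigma_j(B) - \sigma_n(B)$ whenever $\det B$ is real and nonnegative, and one arranges this by choosing the diagonal unitary $D$ so that $\det(DA) \geq 0$; this is the step you would need to supply. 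Similarly, in the inductive step your diagnosis that a naive pairing can fail is correct, but the phrase ``the pairing must be adjusted according to whether the weak majorization or the extra inequality is closer to equality'' again defers the real work: Thompson's proof involves a nontrivial case analysis and an intermediate-value argument to locate the right index, and that is where the content lies.
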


A compelling case can be made that von Neumann factors of type \II1 are the best infinite dimensional analogues of matrix algebras. Recently, Ravichandran \cite{R2012} proved a version of the Schur-Horn theorem for these algebras, thereby settling a problem of Arveson and Kadison \cite{AK2006}. This result is the culmination of a great deal of work done by Argerami and Massey \cites{AM2007,AM2009,AM2013}; Dykema, Fang, Hadwin and Smith \cite{DFHS2011}; and Bhat and Ravichandran \cite{BR2011}.

The main result in this paper is a version of Thompson's theorem for \II1 factors. Before stating the result, it is helpful to first consider the classical Schur-Horn theorem and Thompson's theorem from a slightly different perspective.

Let $\lambda,\alpha \in \bR^n$ be vectors with $\lambda_1 \geq \cdots \geq \lambda_n$ and $\alpha_1 \geq \cdots \geq \alpha_n$. If (\ref{eq:thm:schur-horn-1}) holds, then we say that $\lambda$ majorizes $\alpha$ and write $\alpha \maj \lambda$.

Observe that the Schur-Horn theorem is equivalent to the statement that if $S \in \M_n$ is a diagonal matrix with diagonal $\lambda$, then $\alpha \maj \lambda$ if and only if there is a unitary $U \in \U(\M_n)$ such that the matrix $USU^*$ has diagonal $\alpha$.

Now let $\sigma \in \bR^n$ and $\alpha \in \bC^n$ be vectors with $\sigma_1 \geq \cdots \geq \sigma_n \geq 0$ and $|\alpha_1| \geq \cdots \geq |\alpha_n|$. If (\ref{eq:thm:thompson-1}) holds, then we say that $\sigma$ (absolutely) submajorizes $\alpha$ and write $\alpha \submaj \sigma$.

Observe that Thompson's theorem is equivalent to the statement that if $S \in \M_n$ is a diagonal matrix with diagonal $\sigma$, then $\alpha \submaj \sigma$ and (\ref{eq:thm:thompson-2}) if and only if there are unitaries $U,V \in \U(\M_n)$ such that the matrix $USV$ has diagonal $\alpha$.

In the setting of a \II1 factor $\M$, there are good analogues of eigenvalues and singular values. This has been known since the work of Murray and von Neumann \cite{MV1936} (see also \cites{F1982, K1983,P1985,FK1986,H1987}). Using these ideas, Hiai \cite{H1987} defined notions of majorization and submajorization for elements in $\M$, using the same notation as above (see Section \ref{sec:preliminaries}).  The appropriate analogue of the diagonal of an $n \times n$ matrix is the normal conditional expectation onto a MASA $\A$ of $\M$ (cf. \cite{SS2008}).

Ravichandran \cite{R2012}*{Theorem 5.6} proved the following version of the Schur-Horn theorem for \II1 factors.

\begin{thm}[Ravichandran] \label{thm:ravichandran}
Let $\M$ be a \II1 factor, let $\A$ be a MASA in $\M$, let $E_{\A} : \M \to \A$ denote the normal conditional expectation onto $\A$, and let $T \in \M$ and $A \in \A$ be self-adjoint elements.  Then $A \maj T$ if and only if there exists
\[
S \in \overline{\{ UTU^* \mid U \in \U(\M) \}}
\]
such that $E_{\A}(S) = A$.
\end{thm}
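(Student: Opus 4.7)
My plan is to handle the two directions separately, with necessity essentially formal and sufficiency accounting for most of the work.

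For necessity, suppose $S_n := U_n T U_n^* \to S$ in norm with $E_{\A}(S) = A$. Conjugation by a unitary preserves the eigenvalue (spectral scale) function $\lambda_t(\cdot)$, and this function is norm-continuous on $\M_{\mathrm{sa}}$, so $\lambda_t(S) = \lambda_t(T)$ for a.e.\ $t \in (0,1)$; in particular $S$ and $T$ share the same $\tau$-distribution. The normal, faithful, trace-preserving conditional expectation $E_{\A}$ is doubly stochastic in the sense of Hiai, so $E_{\A}(S) \maj S$ by his majorization theorem for such maps. Combining these gives $A = E_{\A}(S) \maj T$.

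For sufficiency, assume $A \maj T$. I would proceed in three stages. First, using the spectral theorem, approximate $T$ in norm by self-adjoint $T_n \in \M$ with finite spectrum, and choose corresponding finite-spectrum approximants $A_n \in \A$ with $A_n \maj T_n$ and $\|A_n - A\| \to 0$; because the assignment $X \mapsto \overline{\{UXU^* : U \in \U(\M)\}}$ is upper semicontinuous in norm, it is enough to solve the problem for each pair $(T_n, A_n)$ with an error shrinking in $n$. Second, for finite spectrum the relation $A_n \maj T_n$ becomes a finite combinatorial condition, which the classical Schur--Horn theorem---applied after cutting down to a matrix subfactor of $\M$ containing the spectral projections of $T_n$---realizes as a diagonal profile in that subfactor, up to unitary conjugation within the subfactor.

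The main obstacle is the third stage: aligning the matrix-level realization with the MASA $\A$, which in general does not contain any of the relevant spectral projections. The workhorse here is an averaging lemma of the type developed by Argerami--Massey, Dykema--Fang--Hadwin--Smith, and Bhat--Ravichandran: for a self-adjoint $X$, a projection $p \in \A$, and any $\ep > 0$, one can find a unitary $W \in \U(\M)$ equal to $1$ on $p^{\perp}$ such that $p E_{\A}(WXW^*) p$ is within $\ep$ in norm of the trace average $\tau(pXp)\tau(p)^{-1}p$. Iterating this lemma along a nested sequence of partitions of unity in $\A$ extracted from the spectral projections of $A_n$ and $T_n$ drives $E_{\A}(U_k T_n U_k^*) \to A_n$ in norm. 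Passing to the orbit closure---needed precisely because each averaging step is only approximate---and combining with the outer approximation in Step 1 produces the required $S$ in the orbit closure of $T$.
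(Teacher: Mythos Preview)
This theorem is not proved in the paper; it is quoted from \cite{R2012}. Section~\ref{sec:relationship} does derive it from Theorem~\ref{thm:thompson-intro} via Lemma~\ref{lem:sherman}, but that derivation is not self-contained, since the proof of Theorem~\ref{thm:thompson-intro} already invokes Theorem~\ref{thm:ravichandran}. So there is no independent ``paper's own proof'' to compare against, and your sketch has to be judged on its own.

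Your necessity argument is correct and matches what the paper uses (Theorem~\ref{thm:expectation-submajorizes} plus the characterization of the orbit closure). The sufficiency sketch, however, has a genuine gap precisely where you write ``passing to the orbit closure.'' Your three stages produce, at best, a sequence $S_k\in\{UTU^*:U\in\U(\M)\}$ (or in nearby orbits $\O(T_n)$, which is harmless by your upper-semicontinuity remark) with $E_\A(S_k)\to A$ in norm. That is the Argerami--Massey conclusion $A\in\overline{E_\A(\O(T))}$. The theorem as stated demands $A\in E_\A\bigl(\overline{\O(T)}\bigr)$, i.e.\ a single $S$ in the norm-closed orbit with $E_\A(S)=A$ exactly. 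Nothing in your outline extracts such an $S$: the $S_k$ are bounded but need not admit a norm-convergent subsequence, and a weak$^*$ cluster point need not lie in the norm orbit closure (equivalently, need not be equidistributed with $T$). The passage from $\overline{E_\A(\O(T))}$ to $E_\A\bigl(\overline{\O(T)}\bigr)$ is exactly what separates Ravichandran's theorem from the earlier results you cite, and his argument does not work by approximation followed by a limit; it constructs $S$ directly. As written, your scheme reproves the predecessors but not the statement in question.
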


The following result, which is the main result of this paper, is our analogue of Thompson's theorem for \II1 factors. We do not require anything like condition (\ref{eq:thm:thompson-2}), which can be explained by the lack of minimal projections in \II1 factors.

\begin{thm} \label{thm:thompson-intro}
Let $\M$ be a \II1 factor, let $\A$ be a MASA in $\M$, let $E_{\A} : \M \to \A$ denote the normal conditional expectation onto $\A$, and let $T \in \M$ and  $A \in \A$ be arbitrary elements.  Then $A \submaj T$ if and only if there exists
\[
S \in \overline{\{ UTV \mid U,V \in \U(\M) \}}
\]
such that $E_{\A}(S) = A$. 
\end{thm}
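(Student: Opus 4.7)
The necessity direction is the easy one: conditional expectations onto MASAs are trace-preserving and unital positive, hence preserve submajorization ($E_\A(S) \submaj S$ for all $S \in \M$); the bi-unitary orbit $\{UTV : U, V \in \U(\M)\}$ preserves singular values of $T$ exactly, and this property passes to the closure. Combining these, any $A = E_\A(S)$ with $S$ in the closure satisfies $A \submaj S \submaj T$, so $A \submaj T$.

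For sufficiency, my plan is two polar-decomposition reductions followed by a $2\times 2$ dilation argument invoking Ravichandran's theorem. Polar-decomposing $T = V_T|T|$ and extending the partial isometry $V_T$ to a unitary in $\M$ (possible because the initial and final projections of $V_T$ have equal trace in the II$_1$ factor $\M$), the bi-unitary orbit of $T$ coincides with that of $|T|$, so one may assume $T \geq 0$. Similarly, polar-decomposing $A = V_A|A|$ with $V_A$ extended to a unitary in $\A$, the result for general $A$ reduces to the positive case: if $X \in \overline{\{UTV\}}$ has $E_\A(X) = |A|$, then $V_A X \in \overline{\{UTV\}}$ and $E_\A(V_A X) = V_A|A| = A$; hence one may assume both $T$ and $A$ are positive. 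With $T, A \geq 0$, I would pass to the II$_1$ factor $\tilde{\M} = M_2(\M)$ and consider the MASA $\tilde{\A} = \{\begin{pmatrix} c & d \\ d & c \end{pmatrix} : c, d \in \A\}$ (a MASA in $\tilde{\M}$, being conjugate to the diagonal MASA $\A \oplus \A$ via the Hadamard unitary in $M_2(\bC)$), together with the self-adjoint elements $\tilde{T} = \begin{pmatrix} 0 & T \\ T & 0\end{pmatrix}$ and $\tilde{A} = \begin{pmatrix} 0 & A \\ A & 0\end{pmatrix} \in \tilde{\A}$. Their eigenvalue distributions are $\pm\mu_s(T)$ and $\pm\mu_s(A)$ respectively, symmetric about zero; a direct computation with decreasing rearrangements shows that $A \submaj T$ is equivalent to $\tilde{A} \maj \tilde{T}$. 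Ravichandran's theorem (Theorem \ref{thm:ravichandran}) applied in $\tilde{\M}$ then yields $\tilde{S} \in \overline{\{\tilde{U}\tilde{T}\tilde{U}^* : \tilde{U} \in \U(\tilde{\M})\}}$ with $E_{\tilde{\A}}(\tilde{S}) = \tilde{A}$.

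The hardest part will be the final extraction: producing from $\tilde{S} \in M_2(\M)$ an element $X \in \overline{\{UTV\}}$ with $E_\A(X) = A$. Writing $\tilde{S} = \begin{pmatrix} S_{11} & S_{12} \\ S_{12}^* & S_{22}\end{pmatrix}$, a direct calculation of $E_{\tilde{\A}}$ shows that $E_{\tilde{\A}}(\tilde{S}) = \tilde{A}$ unpacks only to $E_\A(S_{11}+S_{22}) = 0$ and $\re E_\A(S_{12}) = A$, which is weaker than $E_\A(S_{12}) = A$; moreover, $S_{12}$ is not automatically in the bi-unitary orbit closure of $T$ in $\M$. The plan is to address both issues by restricting, or at least approximating, Ravichandran's sequence by block-diagonal unitaries $\tilde{U} = \begin{pmatrix} U & 0 \\ 0 & V\end{pmatrix}$, for which $\tilde{U}\tilde{T}\tilde{U}^* = \begin{pmatrix} 0 & UTV^* \\ VTU^* & 0\end{pmatrix}$ is anti-diagonal with $(1,2)$-block in $\{UTV\}$; and by exploiting the symmetries of $(\tilde{T}, \tilde{A}, \tilde{\A})$ under conjugation by the swap $\sigma_1 = \begin{pmatrix} 0 & 1 \\ 1 & 0\end{pmatrix}$ in combination with the standard II$_1$-factor averaging technique (realizing convex combinations of orbit elements as single orbit elements via projections of prescribed trace in $\tilde{\M}$) to enforce the self-adjointness of $E_\A(X)$. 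This extraction step captures the genuinely non-self-adjoint content of Thompson's theorem that is not already contained in Ravichandran's theorem, and is where I expect the bulk of the technical work to lie.
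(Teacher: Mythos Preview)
Your necessity argument and the polar-decomposition reductions to positive $A,T$ are correct and match the paper. The $2\times 2$ dilation and the equivalence $A \submaj T \Leftrightarrow \tilde{A} \maj \tilde{T}$ are also correct.

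The gap is the extraction step, and it is not merely technical: as stated, your plan is essentially circular. The orbit of $\tilde{T}$ under block-diagonal unitaries $\tilde{U} = \operatorname{diag}(U,V)$ is exactly the set of anti-diagonal self-adjoint matrices whose $(1,2)$-block lies in $\{UTV : U,V \in \U(\M)\}$, and its norm closure inside $\O(\tilde{T})$ is the set of anti-diagonal matrices with $(1,2)$-block in $\N(T)$. So asking Ravichandran's theorem to produce its $\tilde{S}$ in this restricted orbit, with $E_{\tilde{\A}}(\tilde{S}) = \tilde{A}$, is \emph{equivalent} to the statement you are trying to prove. Ravichandran's theorem gives no control over which unitaries realize the approximation; it only gives the existence of some $\tilde{S}$ in the full closed orbit, and that $\tilde{S}$ has no reason to be anti-diagonal. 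Your symmetry idea does not close the gap either: conjugation by $\sigma_1$ sends one solution $\tilde{S}$ to another, but the average $\tfrac{1}{2}(\tilde{S} + \sigma_1\tilde{S}\sigma_1)$ generally has a different spectral distribution and hence leaves $\O(\tilde{T})$; and the standard trick of realizing a convex combination as a direct sum produces an element of $\M_2(\tilde{\M}) = \M_4(\M)$, moving you further from $\M$ rather than closer. In short, the $2\times 2$ dilation cleanly transfers the \emph{majorization condition} to the self-adjoint setting, but it does not transfer the \emph{conclusion}: what comes back from $\M_2(\M)$ is only the weak information $\re E_\A(S_{12}) = A$ for some $\tilde{S} \in \O(\tilde{T})$, and there is no evident mechanism to upgrade this.

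By contrast, the paper avoids any extraction problem by never leaving $\M$. It uses Ravichandran's theorem only on a piece of the problem where genuine majorization already holds in $\M$: one splits $[0,1)$ into a set $\Z$ and its complement so that, on the compressions corresponding to $\Z$, one has $A_1 \maj T_1$ (and Theorem~\ref{thm:ravichandran} applies directly in $P\M P$), while on the complement one has pointwise dominance $\mu_s(A_2) \leq \mu_s(T_2)$. The pointwise-dominance case is handled by an independent constructive argument (Propositions~\ref{prop:complete-dominance}--\ref{prop:dominance}) that does not invoke Ravichandran at all. That constructive argument is precisely the ``non-self-adjoint content'' you correctly locate in the extraction step; the paper supplies it directly rather than trying to pull it out of a $2\times 2$ dilation.
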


In addition to this introduction, there are three other sections. In Section \ref{sec:preliminaries} we recall the notions of $s$-values and submajorization. In Section \ref{sec:main} we prove Theorem \ref{thm:thompson-intro}, our main result. In Section \ref{sec:relationship} we consider the relationship between the Schur-Horn theorem and Thompson's theorem for \II1 factors.

\subsection*{Acknowledgements}

We are grateful to the referee for providing us with detailed comments and suggestions for improving the presentation of this paper.

\section{Preliminaries} \label{sec:preliminaries}
\subsection{Eigenvalue and singular value functions} \label{sec:eigenvalue-singular-value-functions}
It has been known since the work of Murray and von Neumann \cite{MV1936} that there are good analogues of eigenvalues and singular values for elements in a diffuse finite von Neumann algebra (see also \cites{F1982, K1983,P1985,FK1986,H1987}).

Let $\M$ be a diffuse finite von Neumann algebra equipped with a fixed faithful normal unital trace $\tau$ (unless we specify otherwise, $\M$ will always be equipped with this trace).  For $T \in \M$ self-adjoint, we will let $p_T$ denote the \emph{spectral measure} of $T$ on $\bR$; that is, for a Borel set $B \subset \bR$, $p_T(B)$ is the spectral projection of $T$ corresponding to $B$. The \emph{spectral distribution} of $T$ is the unique Borel probability measure $m_T$ on $\bR$ satisfying
\[
\int_{\bR} s^n\, dm_T(s) = \tau(T^n), \quad n \geq 0.
\]
It follows that for every Borel set $\B \subset \bR$, we have $m_T(B) = \tau(p_T(B))$.

\begin{defn} \label{defn:eigenvalue-singular-value-functions}
Let $\M$ be a diffuse finite von Neumann algebra.
\begin{enumerate}
\item For a self-adjoint element $T \in \M$, the \emph{eigenvalue function of $T$} is defined for $s \in [0,1)$ by
\[
\lambda_s(T) = \inf \{t \in \bR \mid m_T((t,\infty)) \leq s \}.
\]
\item For an arbitrary element $T \in \M$, the \emph{singular value function of $T$} is defined for $s \in [0,1)$ by
\[
\mu_s(T) := \lambda_s(|T|).
\]
\end{enumerate}
\end{defn}

It is well-known that the singular value function of $T$ is a non-increasing, right continuous function such that $\mu_s(T) = \mu_s(|T|)$ for all $s \in [0,1)$ and $\mu_0(T) = \left\|T\right\|$ (see \cites{F1982, FK1986}).

Clearly the eigenvalue functions and singular value functions are invariant under (trace-preserving) isomorphisms of von Neumann algebras.  Note also that they do not depend on the ambient von Neumann algebra. In other words, if $\N \subset \M$ is a diffuse von Neumann subalgebra and $T \in \N$, then the eigenvalue and singular value functions of $T$ computed with respect to $\N$ are equal to the eigenvalue and singular value functions of $T$ computed with respect to $\M$.

If $P \in \M$ is a non-zero projection and $T' = PTP \in P \M P$ is the corresponding compression of $T$, then we will write $\lambda_s(T')$ and $\mu_s(T')$ for the eigenvalue function and singular value function of $T'$ as computed in $P \M P$ with respect to the normalized trace inherited from $\M$. This will always be clear from the context. 

The following result seems to be folklore, although a proof is given by Argerami and Massey in \cite{AM2007}*{Proposition 2.3}.
\begin{prop} \label{prop:special-spectral-measure}
Let $\M$ be a diffuse finite von Neumann algebra and let $T \in \M$ be self-adjoint. There is a projection-valued measure $e_T$ on $[0,1)$ such that $\tau(e_T([0,t))) = t$ for every $t \in [0,1)$ and
\[
T^n = \int_0^1 \lambda_s(T)^n\, de_T(s), \quad \forall n \geq 0.
\]
\end{prop}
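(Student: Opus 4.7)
My plan is to build $e_T$ directly by slicing the spectral measure of $T$, so that the trace of $e_T([0,t))$ equals $t$ for every $t \in [0,1)$. Away from atoms of $m_T$ the construction is forced by the eigenvalue function, while at each atom we invoke diffuseness of $\M$ to subdivide the corresponding spectral projection.

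Set $F(a) = m_T((a,\infty))$. From the definition of $\lambda_s$ one checks the identity $\{s \in [0,1) : \lambda_s(T) > a\} = [0, F(a))$, so Lebesgue measure on $[0,1)$ pushes forward to $m_T$ under $\lambda$. This forces $e_T([0, F(a))) = p_T((a,\infty))$, and the definition is unambiguous because $F(a) = F(b)$ for $a < b$ implies $p_T((a, b]) = 0$. The remaining values of $t \in [0,1)$ lie in jump intervals $[F(a), F(a^-)]$ of $F$, one per atom $a$ of $m_T$, of length $m_T(\{a\}) = \tau(p_T(\{a\}))$. Since $\M$ is diffuse, each corner $p_T(\{a\})\M p_T(\{a\})$ is diffuse as well, and so contains a monotone, left-continuous family $\{q_{a,r}\}_{r \in [0, m_T(\{a\})]}$ of projections with $q_{a,0}=0$, $q_{a,m_T(\{a\})}=p_T(\{a\})$, and $\tau(q_{a,r})=r$. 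Setting $e_T([0,t)) = p_T((a,\infty)) + q_{a,\, t-F(a)}$ for $t \in [F(a), F(a^-)]$ interpolates consistently across each jump and agrees with the earlier rule at both endpoints. The resulting projection-valued function is increasing and left-continuous with $\tau(e_T([0,t))) = t$, and hence extends uniquely to a Borel projection-valued measure $e_T$ on $[0,1)$.

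To derive the integral identity, I would verify the change-of-variables formula $e_T(\lambda^{-1}(B)) = p_T(B)$ for every Borel $B \subset \bR$. It holds on half-lines $(a,\infty)$ by construction, and on singletons $\{a\}$ via $e_T([F(a),F(a^-)]) = p_T(\{a\})$, so by the monotone class theorem it holds on all Borel sets. Approximating the bounded Borel function $s \mapsto \lambda_s(T)^n$ by simple functions and applying this identity yields
\[
\int_0^1 \lambda_s(T)^n\,de_T(s) = \int_{\bR} t^n\,dp_T(t) = T^n.
\]
The main obstacle is the atomic case: diffuseness of $\M$ is precisely what permits subdividing each atom's spectral projection into subprojections of prescribed trace, and without this hypothesis a PVM satisfying $\tau(e_T([0,t))) = t$ need not exist. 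Everything else reduces to routine spectral-measure bookkeeping.
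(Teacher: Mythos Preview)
Your argument is correct. The identity $\{s:\lambda_s(T)>a\}=[0,F(a))$ is exactly right, the interpolation across atoms via diffuseness is the standard move, and once $e_T(\lambda^{-1}((a,\infty)))=p_T((a,\infty))$ holds the change-of-variables extends to all Borel sets by the $\pi$--$\lambda$ theorem (half-lines already form a generating $\pi$-system, so the separate singleton check is redundant but harmless). One small point worth making explicit: since $\tau(e_T([0,t)))=t$ and $\tau$ is faithful, $e_T$ is automatically atomless, so the endpoint ambiguities in identifying $\lambda^{-1}(\{a\})$ with $[F(a),F(a^-)]$ versus a half-open variant are genuinely irrelevant.

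As for comparison: the paper does not supply its own proof of this proposition. It records the result as folklore and cites Argerami and Massey \cite{AM2007}*{Proposition 2.3} for a proof. Your construction is essentially the one that appears there---build the projection family from the spectral projections $p_T((a,\infty))$ and use diffuseness to fill in the plateaus of the eigenvalue function---so there is no substantive methodological difference to report. What your write-up adds over a bare citation is the explicit change-of-variables viewpoint $e_T\circ\lambda^{-1}=p_T$, which makes the final integral identity a one-line consequence rather than something to be verified moment by moment.
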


For a self-adjoint element $T \in \M$, we will continue to write $e_T$ for a fixed choice of measure on $[0,1)$ obtained from Proposition \ref{prop:special-spectral-measure}.

\begin{rem} \label{rem:determined-by-spectral-distribution}
It follows from Definition \ref{defn:eigenvalue-singular-value-functions} that if $T \in \M$ is self-adjoint, then the eigenvalue function of $T$ is completely determined by the spectral distribution of $T$. On the other hand, by Proposition \ref{prop:special-spectral-measure},
\[
\tau(T^n) = \int_0^1 \lambda_s(T)^n\, ds, \quad \forall n \geq 0.
\]
Hence the spectral distribution of $T$ is completely determined by the eigenvalue function of $T$.
\end{rem}

\begin{rem} \label{rem:equidistributed-equality-of-singular-values}
For two self-adjoint elements $S,T \in \M$ we will say that $S$ and $T$ are \emph{equi-distributed} if they have identical spectral distributions $m_S$ and $m_T$ respectively. By Remark \ref{rem:determined-by-spectral-distribution}, this is equivalent to the eigenvalue functions $\lambda_s(S)$ and $\lambda_s(T)$ satisfying $\lambda_s(S) = \lambda_s(T)$ for every $s \in [0,1)$ and, if $S$ and $T$ are positive, this is equivalent to the singular value functions $\mu_s(S)$ and $\mu_s(T)$ satisfying $\mu_s(S) = \mu_s(T)$ for every $s \in [0,1)$.
\end{rem}

\subsection{Majorization and submajorization}
Notions of majorization and submajorization can be defined for both the eigenvalue function and the singular value function. We will consider majorization with respect to eigenvalue functions and submajorization with respect to singular value functions.

\begin{defn} \label{defn:majorization}
Let $\M$ be a diffuse finite von Neumann algebra.
\begin{enumerate}
\item For self-adjoint $S,T \in \M$ we will say that $T$ \emph{majorizes} $S$ and write $S \maj T$ if
\begin{align*}
\int_0^t \lambda_s(S)\, ds &\leq \int_0^t \lambda_s(T)\, ds, \quad \forall t \in [0,1),\ \text{and} \\
\int_0^1 \lambda_s(S)\, ds &= \int_0^1 \lambda_s(T)\, ds.
\end{align*}
\item For arbitrary $S,T \in \M$ we will say that $T$ \emph{(absolutely) submajorizes} $S$ and write $S \submaj T$ if
\[
\int_0^t \mu_s(S)\, ds \leq \int_0^t \mu_s(T)\, ds, \quad \forall t \in [0,1].
\]
\end{enumerate}
\end{defn}

\begin{rem} \label{rem:positives-majorization-submajorization-equivalent}
It is clear from Definition \ref{defn:eigenvalue-singular-value-functions} and Definition \ref{defn:majorization} that for arbitrary $S,T \in \M$, $S \submaj T$ if and only if $|S| \submaj |T|$. Moreover, if $S$ and $T$ are positive then $S \maj T$ if and only if $S \submaj T$ and $\tau(S) = \tau(T)$.
\end{rem}

We require the following lemma of Fack and Kosaki \cite{FK1986}*{Lemma 4.1} (cf. \cite{F1982}*{Lemma 3.3}). We note that the second statement of the lemma also follows from Remark \ref{rem:determined-by-spectral-distribution}.

\begin{lem} \label{lem:singular-numbers-3}
Let $\N$ be a diffuse finite von Neumann algebra with faithful normal trace $\tau$. For $T \in \N$ and $t \in [0,1]$,
\[
\int_0^t \mu_s(T)\, ds = \sup \{ \tau(|T|P) \mid P \in \Proj(\N),\ \tau(P) \leq t \}.
\]
In particular,
\[
\int^1_0 \mu_s(T) \, dt = \tau(|T|).
\]
\end{lem}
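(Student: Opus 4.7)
The plan is to reduce to the case $T \geq 0$ and establish the two inequalities separately. Since $\mu_s(T) = \mu_s(|T|)$ for every $s$ and $\tau(|T|P)$ only involves $|T|$, I would first replace $T$ by $|T|$ and assume throughout that $T \geq 0$, so that $\mu_s(T) = \lambda_s(T)$. The ``$\geq$'' inequality is then immediate from Proposition \ref{prop:special-spectral-measure}: the projection $P := e_T([0,t))$ is admissible since $\tau(P) = t$, and combining $T = \int_0^1 \lambda_s(T)\, de_T(s)$ with $\tau(e_T([0,s))) = s$ yields $\tau(TP) = \int_0^t \mu_s(T)\, ds$.

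For the ``$\leq$'' inequality I would fix any projection $P \in \N$ with $\tau(P) \leq t$ and apply the layer-cake representation
\[
T = \int_0^{\|T\|} q_T(u)\, du, \qquad q_T(u) := p_T((u,\infty)).
\]
Tracing against $P$ and using the elementary bound
\[
\tau(q_T(u) P) = \tau\!\bigl(q_T(u)^{1/2}\, P\, q_T(u)^{1/2}\bigr) \leq \min\{\tau(q_T(u)),\, \tau(P)\} \leq \min\{g(u),\, t\},
\]
where $g(u) := m_T((u,\infty))$ is the tail distribution function, would give
\[
\tau(TP) \leq \int_0^{\|T\|}\min\{g(u),\, t\}\, du.
\]

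The main obstacle is the rearrangement identity
\[
\int_0^{\|T\|}\min\{g(u),\, t\}\, du = \int_0^t \mu_s(T)\, ds.
\]
By Definition \ref{defn:eigenvalue-singular-value-functions}, the map $s \mapsto \mu_s(T)$ is the right-continuous generalized inverse of the non-increasing tail $u \mapsto g(u)$, so both sides equal the two-dimensional Lebesgue measure of the region $\{(u,s) \in [0,\|T\|)\times[0,t) : \mu_s(T) > u\}$; the identity then follows by a routine Fubini computation. The ``in particular'' statement is obtained by setting $t = 1$ and $P = 1$.
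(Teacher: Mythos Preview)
Your argument is correct. The reduction to $T \geq 0$, the lower bound via $P = e_T([0,t))$ from Proposition~\ref{prop:special-spectral-measure}, the upper bound via the layer-cake identity $T = \int_0^{\|T\|} p_T((u,\infty))\,du$ together with the elementary trace estimate $\tau(q_T(u)P) \leq \min\{\tau(q_T(u)),\tau(P)\}$, and the final Fubini step identifying $\int_0^{\|T\|}\min\{g(u),t\}\,du$ with $\int_0^t \mu_s(T)\,ds$ all go through as you describe; the equivalence $\mu_s(T) > u \iff g(u) > s$ is exactly the duality between a right-continuous non-increasing function and its generalized inverse.

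As for comparison with the paper: the paper does not actually prove this lemma. It is simply quoted from Fack and Kosaki \cite{FK1986}*{Lemma 4.1} (see also \cite{F1982}*{Lemma 3.3}), with the remark that the ``in particular'' clause also follows from Remark~\ref{rem:determined-by-spectral-distribution}. Your proof is therefore more self-contained than what the paper offers. For what it is worth, the argument in \cite{FK1986} is close in spirit to yours: the upper bound there also proceeds through the distribution function and a layer-cake/Fubini computation, while the attainment of the supremum is obtained from a spectral projection of $|T|$ of the appropriate trace, essentially your $e_T([0,t))$.
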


Let $\M$ be a \II1 factor, let $\A$ be a MASA in $\M$, and let $E_{\A} : \M \to \A$ denote the normal conditional expectation onto $\A$.  Hiai \cite{H1987}*{Theorem 4.5} (see also \cite{AK2006}*{Theorem 7.2}) showed that if $T \in \M$ is self-adjoint, then $E_{\A}(T) \maj T$. We require an extension of this result to the case when $T$ is not necessarily self-adjoint.

\begin{thm} \label{thm:expectation-submajorizes}
Let $\M$ be a \II1 factor, let $\N$ be a diffuse von Neumann subalgebra of $\M$, and let $E_{\N} : \M \to \N$ denote the normal conditional expectation onto $\N$.  For every $T \in \M$, $E_{\N}(T) \submaj T$.
\end{thm}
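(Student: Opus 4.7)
The plan is to use Lemma \ref{lem:singular-numbers-3}, applied with ambient algebra $\N$, to reduce the submajorization inequality
\[
\int_0^t \mu_s(E_{\N}(T))\, ds \leq \int_0^t \mu_s(T)\, ds
\]
to a statement about traces against projections in $\N$, and then transport those traces from $\N$ back to $\M$ using the bimodule property and trace-preservation of $E_{\N}$. Recall that singular value functions do not depend on the ambient algebra, so $\mu_s(E_{\N}(T))$ computed in $\N$ agrees with the one computed in $\M$. By Lemma \ref{lem:singular-numbers-3} applied in $\N$,
\[
\int_0^t \mu_s(E_{\N}(T))\, ds = \sup\{\tau(|E_{\N}(T)|Q) \mid Q \in \Proj(\N),\ \tau(Q) \leq t\},
\]
so it suffices to bound $\tau(|E_{\N}(T)|Q)$ by $\int_0^t \mu_s(T)\, ds$ for each such $Q$.

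Fix $Q \in \Proj(\N)$ with $\tau(Q) \leq t$ and take the polar decomposition $E_{\N}(T) = V|E_{\N}(T)|$, so that $V \in \N$ is a partial isometry and $|E_{\N}(T)| = V^* E_{\N}(T)$. Using the $\N$-bimodule property of $E_{\N}$ (both $V^*$ and $Q$ lie in $\N$) together with $\tau \circ E_{\N} = \tau$, we compute
\[
\tau(|E_{\N}(T)|Q) = \tau(V^* E_{\N}(T) Q) = \tau(E_{\N}(V^* T Q)) = \tau(V^* T Q).
\]
Since $\tau(|E_{\N}(T)|Q) = \tau(Q^{1/2}|E_{\N}(T)|Q^{1/2}) \geq 0$, this quantity equals $|\tau(V^* T Q)|$.

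To finish, I would apply the standard $L^1$-norm bound $|\tau(X)| \leq \tau(|X|) = \int_0^1 \mu_s(X)\, ds$ to $X = V^* T Q$. Because $V^* T Q = (V^* T Q)Q$, the right support of $V^* T Q$ is dominated by $Q$, hence the support of $|V^* T Q|$ has trace at most $\tau(Q)$ and $\mu_s(V^* T Q) = 0$ for $s \geq \tau(Q)$. Combined with the standard inequality $\mu_s(AXB) \leq \|A\|\,\|B\|\,\mu_s(X)$ applied with $A = V^*$ and $B = Q$, this yields
\[
|\tau(V^* T Q)| \leq \int_0^{\tau(Q)} \mu_s(V^* T Q)\, ds \leq \int_0^{\tau(Q)} \mu_s(T)\, ds \leq \int_0^t \mu_s(T)\, ds.
\]
Taking the supremum over $Q$ gives the desired inequality, which is valid for every $t \in [0,1]$, hence $E_{\N}(T) \submaj T$.

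The main obstacle is keeping track of where each object lives: the polar decomposition must be performed inside $\N$ so that $V^* \in \N$ can be absorbed into the bimodule identity for $E_{\N}$, and the computation of $\mu_s(E_{\N}(T))$ must exploit the fact that singular value functions are independent of the ambient algebra so that Lemma \ref{lem:singular-numbers-3} can be applied with projections drawn from $\N$ rather than from $\M$. Once this bookkeeping is in place, the rest is a routine application of the support-containment $\mu_s(XQ) = 0$ for $s \geq \tau(Q)$ and the contractivity $\mu_s(V^* T Q) \leq \mu_s(T)$.
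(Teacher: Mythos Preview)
Your proof is correct and shares the same opening with the paper: both apply Lemma~\ref{lem:singular-numbers-3} in $\N$, take a polar decomposition of $E_{\N}(T)$ inside $\N$, and use the bimodule property and trace-preservation of $E_{\N}$ to obtain $\tau(|E_{\N}(T)|Q)=\tau(V^*TQ)$ for $Q\in\Proj(\N)$. The divergence is in how this last trace is controlled. The paper takes a second polar decomposition $T=W|T|$ in $\M$, writes $U^*T=(U^*W)|T|$, and applies the Cauchy--Schwarz--type inequality $\re\tau(RS)\leq\tfrac12(\tau(RR^*)+\tau(S^*S))$ to bound $\tau(U^*TQ)$ by $\max\{\tau(|T|Q),\tau(|T|\,W^*UQU^*W)\}$, after which another appeal to Lemma~\ref{lem:singular-numbers-3} (now in $\M$) finishes. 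You instead invoke the standard Fack--Kosaki $s$-number calculus directly: the right-support bound forces $\mu_s(V^*TQ)=0$ for $s\geq\tau(Q)$, and $\mu_s(V^*TQ)\leq\|V^*\|\|Q\|\mu_s(T)\leq\mu_s(T)$ handles the rest. Your route is a bit shorter and avoids the second polar decomposition, at the cost of importing the inequality $\mu_s(AXB)\leq\|A\|\|B\|\mu_s(X)$ from~\cite{FK1986} rather than deriving everything from Lemma~\ref{lem:singular-numbers-3} alone; the paper's argument is more self-contained in that respect.
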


\begin{proof}
Let $\tau$ denote the faithful normal trace on $\M$. Observe that for $R,S \in \M$, expanding the inequality
\[
\tau((R^* - S)^*(R^* - S)) \geq 0
\]
implies
\begin{equation} \label{eq:expectation-submajorizes-ineq}
\re(\tau(RS)) \leq \frac{1}{2} \big(\tau(RR^*) + \tau(S^*S)\big).
\end{equation}

We will now prove that $|E_{\N}(T)| \submaj |T|$, which implies the desired result. Let $U \in \N$ be a unitary obtained from a polar decomposition of $E_{\N}(T)$, so that $|E_{\N}(T)| = U^*E_{\N}(T)$. Note for a projection $P \in \N$,
\begin{equation} \label{eq:expectation-submajorizes-1}
\tau(|E_{\N}(T)|P) = \tau(U^*E_{\N}(T)P) = \tau(E_{\N}(U^*TP)) = \tau(U^*TP).
\end{equation}

Let $W \in \M$ be a unitary obtained from a polar decomposition of $T$, so that $T = W|T|$. Then $V = U^*W$ is a unitary such that
\[
U^*T = U^*W|T| = V|T|.
\]
Given a projection $P \in \M$, applying (\ref{eq:expectation-submajorizes-ineq}) gives
\begin{align}
\tau(U^*TP) &= \tau(PV|T|P) \notag \\
&= \tau\big((PV|T|^{1/2})(|T|^{1/2}P)\big) \notag \\
&\leq \frac{1}{2}\big(\tau(PV|T|V^*P) + \tau(P|T|P)\big) \notag \\
&= \frac{1}{2}\big(\tau(|T|V^*PV) + \tau(|T|P)\big) \notag \\
&\leq \max \{ \tau(|T|Q),\ \tau(|T|P) \}, \label{eq:expectation-submajorizes-2}
\end{align}
where $Q = V^*PV$ is a projection with the same trace as $P$.

Given $t \in [0,1)$, applying (\ref{eq:expectation-submajorizes-1}), (\ref{eq:expectation-submajorizes-2}), and Lemma \ref{lem:singular-numbers-3} gives
\begin{align*}
\int_0^t \mu_s(|E_{\N}(T)|)\, ds &= \sup \{ \tau(|E_{\N}(T)|P) \mid P \in \Proj(\N), \ \tau(P) \leq t \} \\
&\leq \sup \{ \tau(|T|P) \mid P \in \Proj(\M),\, \tau(P) \leq t \} \\
&= \int_0^t \mu_s(|T|)\, ds
\end{align*}
and it follows that $|E_{\N}(T)| \submaj |T|$.
\end{proof}

\begin{rem}
Note that for $T$ as in the statement of Theorem \ref{thm:expectation-submajorizes}, $\tau(E_{\N}(T)) = \tau(T)$. Moreover, if $T$ is self-adjoint, then clearly $E_{\N}(T) \maj T$ if and only if $E_{\N}(T) + \alpha 1_{\M} \maj T + \alpha 1_{\M}$ for every $\alpha \geq 0$. Hence Remark \ref{rem:positives-majorization-submajorization-equivalent} and Theorem \ref{thm:expectation-submajorizes} implies \cite{AK2006}*{Theorem 7.2} of Arveson and Kadison.
\end{rem}

\subsection{Unitary orbits} \label{sec:orbits}
\begin{defn} \label{defn:unitary-orbits}
Let $\M$ be a von Neumann algebra and let $T \in \M$. 
\begin{enumerate}
\item The \emph{closed unitary orbit of $T$} is
\[
\O(T) := \overline{\{ UTU^* \mid U \in \U(\M) \}}.
\]
\item The \emph{closed two-sided unitary orbit of $T$} is
\[
\N(T) := \overline{\{ UTV \mid U,V \in \U(\M) \}}.
\]
\end{enumerate}
\end{defn}

Let $\M$ be a \II1 factor. It was shown by Kamei \cite{K1983}*{Theorem 4} (see also \cite{AK2006}*{Theorem 5.4}) that if $T \in \M$ is self-adjoint, then a self-adjoint element $S \in \M$ belongs to the closed unitary orbit $\O(T)$ if and only if $S$ and $T$ are equi-distributed. Hence, by Remark \ref{rem:determined-by-spectral-distribution}, $\O(T)$ is completely determined by the eigenvalue function of $T$.

The next theorem shows that for arbitrary $T \in \M$, the closed two-sided unitary orbit $\N(T)$ is completely determined by the singular value function of $T$.

\begin{thm} \label{thm:two-sided-unitary-orbit}
Let $\M$ be a \II1 factor and let $T \in \M$. Then
\[
\N(T) = \{ S \in \M \mid \mu_s(S) = \mu_s(T) \ \forall s \in [0,1) \}.
\]
\end{thm}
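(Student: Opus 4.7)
The plan is to prove the two inclusions separately. For the inclusion $\N(T) \subseteq \{S : \mu_s(S) = \mu_s(T)\,\forall s\}$, I would first note that singular values are invariant under two-sided multiplication by unitaries: if $R = UTV$ with $U, V \in \U(\M)$, then $|R|^2 = V^*|T|^2 V$, so $|R|$ is unitarily conjugate to $|T|$, hence equidistributed, which gives $\mu_s(R) = \mu_s(T)$ for all $s$ by Remark \ref{rem:equidistributed-equality-of-singular-values}. The inclusion then follows from the standard fact that singular value functions satisfy $|\mu_s(R)-\mu_s(T)| \leq \|R-T\|$, so they are continuous under norm limits.

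For the reverse inclusion, the strategy is to reduce to Kamei's theorem applied to the positive elements $|S|$ and $|T|$, via polar decomposition. First I would replace the partial isometries from the polar decompositions by unitaries: since $\M$ is a $II_1$ factor, for any partial isometry $w$ the projections $1-w^*w$ and $1-ww^*$ have equal trace and are therefore Murray--von Neumann equivalent, so any partial isometry can be extended to a unitary. Applying this to the polar decompositions of $S$ and $T$, I can write $S = u_S|S|$ and $T = u_T|T|$ with $u_S, u_T \in \U(\M)$; in particular $u_T^* T = |T|$.

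Next, since $\mu_s(|S|) = \mu_s(S) = \mu_s(T) = \mu_s(|T|)$ for all $s$ and $|S|, |T|$ are self-adjoint (in fact positive), Remark \ref{rem:equidistributed-equality-of-singular-values} gives that $|S|$ and $|T|$ are equidistributed. By Kamei's theorem (cited just before the statement of Theorem \ref{thm:two-sided-unitary-orbit}), this means $|S| \in \O(|T|)$, so there exists a sequence of unitaries $v_n \in \U(\M)$ with $v_n|T|v_n^* \to |S|$ in norm. Setting $U_n = u_S v_n u_T^*$ and $V_n = v_n^*$, both unitaries, one computes
\[
U_n T V_n = u_S v_n u_T^* T v_n^* = u_S v_n |T| v_n^* \longrightarrow u_S|S| = S
\]
in norm, exhibiting $S \in \N(T)$ as required.

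The only potentially delicate step is the partial-isometry-to-unitary extension, but in a $II_1$ factor this is routine. Everything else just bookkeeps polar decompositions and invokes Kamei's theorem, after which the continuity of multiplication in norm transports the approximation of $|S|$ by conjugates of $|T|$ into an approximation of $S$ by two-sided unitary translates of $T$.
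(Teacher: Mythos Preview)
Your proof is correct and follows essentially the same approach as the paper: both directions are handled identically, with the forward inclusion via continuity of $s$-numbers and the reverse inclusion via polar decompositions (with unitaries, as is possible in a finite factor) combined with Kamei's theorem applied to $|S|$ and $|T|$. The only cosmetic difference is that the paper simply speaks of ``unitaries obtained from polar decompositions'' without spelling out the partial-isometry extension, and cites \cite{FK1986}*{Lemma 2.5} and \cite{AK2006}*{Theorem 5.4} for the continuity and orbit results you invoke.
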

\begin{proof}
It follows from \cite{FK1986}*{Lemma 2.5} that the map taking $S \in \M$ to $\mu_s(S)$ is continuous for every $s \in [0,1)$. This implies the inclusion
\[
\N(T) \subset \{ S \in \M \mid \mu_s(S) = \mu_s(T) \ \forall s \in [0,1) \}.
\]

For the other inclusion, note that if $S \in \M$ satisfies $\mu_s(S) = \mu_s(T)$ for every $s \in [0,1)$, then $\mu_s(|S|) = \mu_s(|T|)$ for every $s \in [0,1)$. Hence, by Remark \ref{rem:equidistributed-equality-of-singular-values} and \cite{AK2006}*{Theorem 5.4}, $|S| \in \O(|T|)$ so there is a sequence of unitaries $U_n \in \M$ such that $\lim_n U_n|T|U_n ^*= |S|$.

Let $V$ and $W$ be unitaries obtained from polar decompositions of $S$ and $T$ respectively, so that $S = V|S|$ and $T = W|T|$. Then $W^*T = |T|$ which gives 
\[
\lim_n VU_nW^*TU_n = \lim_n VU_n|T|U_n^* = V|S| = S.
\]
Hence $S \in \N(T)$.
\end{proof}

\begin{rem}
We  note that for $T \in \M$, it follows directly from a polar decomposition of $T$ that $\N(T) = \N(|T|)$.
\end{rem}

\subsection{Non-increasing rearrangements} \label{sec:rearrangements}
For a real-valued function $f \in L^\infty([0,1),m)$, where $m$ denotes Lebesgue measure on $[0,1)$, the notion of the non-increasing rearrangement of $f$ will be essential in what follows (see e.g. \cite{HL1934}*{Section 10.12}, \cite{C1974} or \cite{HN1987}). It is convenient to work with the interval $[0,1)$ because this is the domain of the singular value function in a \II1 factor.

\begin{defn} \label{defn:rearrangement}
For a real-valued function $f \in L^\infty([0,1),m)$, the \emph{non-increasing rearrangement} of $f$ is the function
\[
f^*(s) = \inf \{ x \mid m(\{ t \mid f(t) \geq x \}) \leq s\}, \quad s \in [0,1).
\]
\end{defn}

Intuitively, the non-increasing rearrangement $f^*$ is obtained by ``rearranging'' the values of $f$. The function $f^*$ is non-increasing and right-continuous on $[0,1)$. Moreover, it is the unique function with these properties that is equi-distributed with $f$, in the sense of Section \ref{sec:eigenvalue-singular-value-functions}.

\begin{rem} \label{rem:iso-to-singular-value-function}
Let $\M$ be a \II1 factor with faithful normal trace $\tau$ and let $\A$ be a diffuse, abelian, countably generated von Neumann subalgebra of $\M$.  There is an isomorphism $\alpha : \A \to L^\infty([0,1),m)$ such that $\tau = \int_0^1dx \circ \alpha$. Let $A \in \A$ be self-adjoint, let $f = \alpha(A)$, and let $f^*$ denote the non-increasing rearrangement of $f$.  It follows from Definition \ref{defn:eigenvalue-singular-value-functions} and Definition \ref{defn:rearrangement} that $\lambda_s(A) = f^*(s)$ for every $s \in [0,1)$.  In addition, if $\A$ is generated by $e_A$, Remark \ref{rem:determined-by-spectral-distribution} implies one may select $\alpha$ such that $\alpha(A) = f^*$.
\end{rem}

The following three technical lemmas will be needed when we consider the relationship between the singular value function of an element in a \II1 factor and the singular value function of its compression to an invariant subspace.

\begin{lem} \label{lem:rearrangement-restriction}
Let $f \in L^\infty([0,1),m)$ be a real-valued function and let $f^*$ denote the non-increasing rearrangement of $f$. For every Borel subset $\X \subset [0,1)$, there is a Borel subset $\Y \subset [0,1)$ such that the restrictions $f|_{\Y}$ and $f^*|_{\X}$ are equi-distributed under an isomorphism of $L^\infty(\Y, m|_\Y)$ and $L^\infty(\X, m|_\X)$. The set $\Y$ is determined by $e_f(\X) = \mathbbm{1}_{\Y}$, where $e_f$ is the spectral measure from Proposition \ref{prop:special-spectral-measure}.
\end{lem}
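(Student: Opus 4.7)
The plan is to identify the spectral measure $e_f$ with a measure-preserving map $\phi: [0,1) \to [0,1)$ under which the non-increasing rearrangement $f^*$ pulls back to $f$. The set $\Y$ then arises as the preimage $\phi^{-1}(\X)$, and the equi-distribution claim reduces to a change-of-variables.

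First, I would apply Proposition \ref{prop:special-spectral-measure} to the diffuse finite von Neumann algebra $\M = L^\infty([0,1), m)$ with its canonical trace $\tau(g) = \int_0^1 g\, dm$ and the self-adjoint element $f$. This yields a projection-valued measure $e_f$ on $[0,1)$ valued in $\M$ with $\tau \circ e_f = m$ and (using $\lambda_s(f) = f^*(s)$ from Remark \ref{rem:iso-to-singular-value-function}) the representation $f = \int_0^1 f^*(s)\, de_f(s)$. Because every projection in $L^\infty([0,1), m)$ is the characteristic function of a Borel set (unique modulo a null set), the assignment $\X \mapsto e_f(\X)$ defines a measure-preserving $\sigma$-homomorphism of the Lebesgue measure algebra of $[0,1)$ into itself. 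By the standard realization theorem for $\sigma$-homomorphisms of standard Borel probability spaces, this lifts to a measure-preserving Borel map $\phi: [0,1) \to [0,1)$ with
\[
e_f(\X) = \mathbbm{1}_{\phi^{-1}(\X)}
\]
for every Borel $\X$. Setting $\Y = \phi^{-1}(\X)$ gives $m(\Y) = \tau(e_f(\X)) = m(\X)$, and this is precisely the set specified in the lemma.

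Next, I would translate the integration formula $f = \int_0^1 f^*\, de_f$ into the pointwise identity $f = f^* \circ \phi$ almost everywhere: this is immediate when $f^*$ is a simple function, and the general case follows by monotone approximation. Restricting yields a measure-preserving map $\phi|_\Y : (\Y, m|_\Y) \to (\X, m|_\X)$ satisfying $f|_\Y = f^*|_\X \circ \phi|_\Y$, and hence $f|_\Y$ and $f^*|_\X$ have identical distributions on $\bR$. Since $m(\Y) = m(\X)$ and both $(\Y, m|_\Y)$ and $(\X, m|_\X)$ are non-atomic standard probability spaces (after normalization by the common total mass), the classical isomorphism theorem for non-atomic measure spaces produces a trace-preserving $*$-isomorphism between $L^\infty(\Y, m|_\Y)$ and $L^\infty(\X, m|_\X)$ identifying $f|_\Y$ with $f^*|_\X$. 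The main obstacle is the first step, namely the passage from the abstract projection-valued measure $e_f$ to a concrete measure-preserving point map $\phi$; this relies on the classical representation of measure-algebra morphisms of standard Borel probability spaces by measurable maps, after which the rest of the argument is a routine change-of-variables.
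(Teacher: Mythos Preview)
Your argument is correct but takes a heavier route than the paper. The paper never lifts $e_f$ to a point map: it works directly with the integral representation $f^n = \int_0^1 (f^*(t))^n\, de_f(t)$ supplied by Proposition~\ref{prop:special-spectral-measure}. Multiplying by the projection $e_f(\X) = \mathbbm{1}_{\Y}$ gives $f^n \mathbbm{1}_{\Y} = \int_{\X} (f^*(t))^n\, de_f(t)$, and taking traces yields $\tau(f^n \mathbbm{1}_{\Y}) = \int_{\X} (f^*(t))^n\, dt = \tau((f^*)^n \mathbbm{1}_{\X})$ for every $n \geq 0$. Since equi-distribution in this paper means equality of spectral distributions (Remark~\ref{rem:equidistributed-equality-of-singular-values}), and these are determined by moments for bounded self-adjoint elements, the proof is complete in three lines. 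Your approach, by contrast, invokes the realization theorem for $\sigma$-homomorphisms of standard measure algebras to obtain the pointwise identity $f = f^* \circ \phi$; this gives a more vivid structural picture (and directly exhibits the measure-preserving map between $\Y$ and $\X$), but at the cost of importing a nontrivial classical result that the paper's moment computation sidesteps entirely.
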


\begin{proof}
From the above remarks, $\lambda_s(f) = f^*(s)$ for every $s \in [0,1)$. Hence, by Proposition \ref{prop:special-spectral-measure}, there is a spectral measure $e_f$ on $[0,1)$ such that $\tau(e_f([0,t))) = t$ for every $t \in [0,1)$ and
\[
f^n = \int_0^1 (f^*(t))^n\, de_f(t), \quad \forall n \geq 0.
\]

For a Borel set $\X \subset [0,1)$, there is a Borel set $\Y \subset [0,1)$ such that $e_f(\X) = \mathbbm{1}_{\Y}$, where $\mathbbm{1}_{\Y}$ denotes the indicator function corresponding to the set $\Y$. This implies that
\[
f^n \mathbbm{1}_{\Y} = \int_{\X} (f^*(t))^n\, de_f(t), \quad \forall n \geq 0.
\]
Thus
\[
\tau(f^n \mathbbm{1}_{\Y}) =  \int_{\X} (f^*(t))^n\, dt = \tau((f^*)^n \mathbbm{1}_{\X}), \quad \forall n \geq 0.
\]
It follows that $f|_{\Y}$ and $f^*|_{\X}$ are equi-distributed.
\end{proof}

\begin{lem} \label{lem:rearrangement-compression}
Let $f \in L^\infty([0,1),m)$ be a real-valued function and let $f^*$ denote the non-increasing rearrangement of $f$. 
Let $\Y$ and $\Z$ be Borel subsets determined as in Lemma \ref{lem:rearrangement-restriction} such that $f|_{\Y}$ and $f|_{\Z}$ are equi-distributed with $f^*|_{[0,1/2)}$ and $f^*|_{[1/2,1)}$ respectively. Then $(f|_{\Y})^*(s) = f^*(s/2)$  and $( f|_{\Z})^*(s) = f^*((s+1)/2)$ for all $s \in [0,1)$.
\end{lem}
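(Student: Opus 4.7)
The plan is to use the fact that the non-increasing rearrangement depends only on the distribution of the function, so it suffices to replace $f|_{\Y}$ and $f|_{\Z}$ by $f^*|_{[0,1/2)}$ and $f^*|_{[1/2,1)}$. Both $\Y$ and $\Z$ have Lebesgue measure $1/2$ by the trace normalization in Proposition \ref{prop:special-spectral-measure}, so equipping each of $\Y$, $\Z$, $[0,1/2)$, and $[1/2,1)$ with the restriction of Lebesgue measure normalized to a probability measure (i.e.\ with $2m$), the equi-distribution hypothesis immediately gives $(f|_{\Y})^* = (f^*|_{[0,1/2)})^*$ and $(f|_{\Z})^* = (f^*|_{[1/2,1)})^*$. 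The problem thus reduces to computing these two rearrangements of restrictions of $f^*$.

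For this, the key structural point is that $f^*$ is non-increasing and right-continuous on $[0,1)$, so for every $x \in \bR$ the level set $\{t \in [0,1) : f^*(t) \geq x\}$ is an initial subinterval of $[0,1)$ of Lebesgue measure
\[
b := m(\{t \in [0,1) : f^*(t) \geq x\}) = m(\{t \in [0,1) : f(t) \geq x\}),
\]
where the second equality uses that $f$ and $f^*$ are equi-distributed on $[0,1)$. Intersecting this initial subinterval with $[0,1/2)$ and $[1/2,1)$ yields sets of Lebesgue measure $\min(b, 1/2)$ and $\max(b - 1/2, 0)$, which after rescaling by the factor of $2$ from the probability normalization become $\min(2b, 1)$ and $\max(2b - 1, 0)$ respectively.

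Feeding these into Definition \ref{defn:rearrangement} and using that $s \in [0,1)$ so the conditions $\min(2b,1) \leq s$ and $\max(2b-1,0) \leq s$ reduce to $b \leq s/2$ and $b \leq (s+1)/2$, one recognizes
\[
(f^*|_{[0,1/2)})^*(s) = \inf\{x \in \bR : b \leq s/2\} = f^*(s/2)
\]
and similarly $(f^*|_{[1/2,1)})^*(s) = f^*((s+1)/2)$, combining these with the reduction from the first paragraph to give the claim. The only subtle point is keeping the factor of $2$ from the probability normalization straight throughout; once that is handled correctly, the remainder of the argument is just unwinding of definitions and poses no real obstacle.
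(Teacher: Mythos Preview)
Your proof is correct. Both you and the paper make the same initial reduction via equi-distribution to computing $(f^*|_{[0,1/2)})^*$ and $(f^*|_{[1/2,1)})^*$. After that, the paper proceeds slightly differently: rather than computing these rearrangements directly from Definition~\ref{defn:rearrangement} by analyzing level sets as you do, it defines the rescaling isomorphism $\alpha: L^\infty([0,1/2),2m) \to L^\infty([0,1),m)$ given by $\alpha(g)(s)=g(s/2)$, observes that $\alpha(f^*|_{[0,1/2)})(s)=f^*(s/2)$ is non-increasing, right-continuous, and (since $\alpha$ is a measure-algebra isomorphism) equi-distributed with $f^*|_{[0,1/2)}$, and then invokes the uniqueness of the non-increasing rearrangement. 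Your direct level-set computation is more elementary and makes the role of the normalization factor explicit, while the paper's uniqueness argument is shorter and avoids the small case analysis on $\min(2b,1)$ and $\max(2b-1,0)$.
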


\begin{proof}
We will only prove the result for the interval $[0,1/2)$, since the argument for the interval $[1/2,1)$ is similar. For brevity let $\X = [0,1/2)$. The functions $f|_{\Y}$ and $f^*|_\X$ are equi-distributed by assumption. Hence $(f|_{\Y})^* = (f^*|_\X)^*$, and it suffices to show that $(f^*|_\X)^*(s) = f^*(s/2)$ for all $s \in [0,1)$.

Let $\alpha : L^\infty(\X,m|_\X) \to L^\infty([0,1),m)$ denote the isomorphism defined for $g \in L^\infty(\X,m|_\X)$ by
\[
\alpha(g)(s) = g(s/2), \quad s \in [0,1).
\]
Consider the function $\alpha(f^*|_\X)$. This function is non-increasing and right continuous since $f^*$ is. Moreover, since $\alpha$ is an isomorphism, $\alpha(f^*|_\X)$ is equi-distributed with $f^*|_\X$. Hence by the uniqueness of the non-increasing rearrangement, $(f^*|_\X)^* = \alpha(f^*|_\X)$, which implies that for all $s \in [0,1)$,
\[
(f^*|_\X)^*(s) = \alpha(f^*|_\X)(s) = f^*|_\X(s/2) = f^*(s/2),
\]
as required.
\end{proof}

\begin{lem} \label{lem:rearrangement-range}
Let $f,g \in L^\infty([0,1),m)$ be real-valued functions. Let $X \subset [0,1)$ be a Borel set and let $\Y,\Z \subset [0,1]$ be Borel sets determined as in Lemma \ref{lem:rearrangement-restriction} such that $f|_{\Y}$ and $g|_{\Z}$ are equi-distributed with $f^*|_\X$ and $g^*|_\X$ respectively. Suppose that $f^*|_X \leq g^*|_\X$. Then $(f|_Y)^* \leq (g|_\Z)^*$.
\end{lem}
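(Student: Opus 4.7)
My approach rests on two ingredients: (i) the non-increasing rearrangement of a function depends only on its distribution, so equi-distributed functions share the same rearrangement, and (ii) the non-increasing rearrangement is monotone with respect to pointwise inequality. I plan to execute this in two steps.

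First, I would invoke the hypothesis (provided by Lemma \ref{lem:rearrangement-restriction}) that $f|_{\Y}$ is equi-distributed with $f^*|_{\X}$ and $g|_{\Z}$ is equi-distributed with $g^*|_{\X}$ under measure-space isomorphisms. Since the non-increasing rearrangement is the unique non-increasing, right-continuous function in the equi-distribution class (as noted after Definition \ref{defn:rearrangement}), this gives
\[
(f|_{\Y})^* = (f^*|_{\X})^* \qand (g|_{\Z})^* = (g^*|_{\X})^*.
\]
The lemma thereby reduces to showing $(f^*|_{\X})^* \leq (g^*|_{\X})^*$.

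Second, I would verify this reduced inequality directly from Definition \ref{defn:rearrangement}. From $f^*|_{\X} \leq g^*|_{\X}$, for every real $x$ the super-level sets satisfy
\[
\{ t \in \X : f^*(t) \geq x \} \subseteq \{ t \in \X : g^*(t) \geq x \},
\]
so $m|_{\X}(\{f^* \geq x\}) \leq m|_{\X}(\{g^* \geq x\})$. Consequently, whenever $m|_{\X}(\{g^* \geq x\}) \leq s$ we also have $m|_{\X}(\{f^* \geq x\}) \leq s$, i.e.\ the set of $x$'s appearing in the infimum defining $(g^*|_{\X})^*(s)$ is contained in the analogous set for $f^*|_{\X}$. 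Taking infima yields $(f^*|_{\X})^*(s) \leq (g^*|_{\X})^*(s)$ on the common domain, which combined with the first step gives $(f|_{\Y})^* \leq (g|_{\Z})^*$.

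I do not expect any real obstacle. The content is essentially the standard monotonicity of non-increasing rearrangements, used once equi-distribution is exploited to transfer the comparison from $\Y$ and $\Z$ to the common reference set $\X$. The only subtle point is invoking uniqueness of the non-increasing rearrangement within an equi-distribution class, which is what allows one to replace $f|_{\Y}$ and $g|_{\Z}$ by $f^*|_{\X}$ and $g^*|_{\X}$ inside the rearrangement operation.
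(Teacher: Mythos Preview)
Your proposal is correct and follows essentially the same approach as the paper: first reduce to $(f^*|_{\X})^*$ and $(g^*|_{\X})^*$ via equi-distribution, then use monotonicity of the non-increasing rearrangement. The only difference is that you spell out the monotonicity step from Definition \ref{defn:rearrangement}, whereas the paper simply asserts it is ``immediately clear.''
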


\begin{proof}
Since $f|_{\Y}$ and $g|_{\Z}$ are equi-distributed with $f^*|_\X$ and $g^*|_\X$ respectively, $(f|_{\Y})^* = (f^*|_\X)^*$ and $(g|_{\Z})^* = (g^*|_\X)^*$. Also, since $f^* \leq g^*$, it is immediately clear that $(f^*|_\X)^* \leq (g^*|_\X)^*$. It follows that $(f|_Y)^* \leq (g|_\Z)^*$.
\end{proof}

\section{Main Result} \label{sec:main}
\subsection{Outline of the proof}
In this section, we will prove our analogue of Thompson's theorem for \II1 factors. Before proceeding, we briefly outline our approach to the proof.

Let $\M$ be a \II1 factor, let $\A$ be a MASA in $\M$, and let $E_{\A} : \M \to \A$ denote the normal conditional expectation onto $\A$. Let $A \in \A$ and $T \in \M$ be elements satisfying $A \submaj T$. Our goal is to prove that we can find $S \in \N(T)$ such that $E_{\A}(S) = A$.

We will proceed in stages, beginning with very strong assumptions on $A$ and $T$, and then weakening these assumptions at each subsequent stage. Specifically, we will consider the following stages, listed in order of increasing generality:

\begin{enumerate}
\item The case of complete dominance, i.e.
\[
\sup \{ \mu_s(A) \mid s \in [0,1) \} \leq \inf \{ \mu_s(T) \mid s \in [0,1) \}.
\]
\item The case of strict dominance, i.e. for some constant $\delta > 0$, 
\[\mu_s(A) + \delta \leq \mu_s(T), \quad \forall s \in [0,1).\]
\item The case of dominance, i.e.
\[\mu_s(A) \leq \mu_s(T), \quad \forall s \in [0,1).\]
\item The general case, i.e. $A \submaj T$.
\end{enumerate}

Throughout the proof we will assume that $A$ and $T$ are positive. To see that there is no loss of generality in making this assumption, first note that $|A| \submaj |T|$ and $\N(T) = \N(|T|)$. Let $U \in \U(\A)$ be a unitary obtained from a polar decomposition of $A$, so that $A = U|A|$. If there is $S \in \N(T)$ such that $E_{\A}(S) = |A|$, then $U^*S \in \N(T)$ and $E_{\A}(U^*S) = U^*|A| = A$. In other words, proving the result for $|A|$ and $|T|$ implies the result for $A$ and $T$. 

\subsection{The case of complete dominance} \label{sec:complete-dominance}
\begin{lem} \label{lem:complete-dominance-1}
Let $\M$ be a \II1 factor with trace $\tau$ and let $\A$ be a MASA in $\M$. Let $A \in \A$ and $T \in \M$ be positive elements such that $A \submaj T$, $T$ is invertible, and
\begin{equation} \label{eq:complete-dominance-1-1}
\sup \{ \mu_s(A) \mid s \in [0,1) \} \leq \inf \{ \mu_s(T) \mid s \in [0,1) \}.
\end{equation}
Then there is a projection $P \in \Proj(\A)$ with $\tau(P) = 1/2$ and unitaries $U,V \in \U(\M)$ such that $PUTVP = AP$ and
\[
\sup \{ \mu_s(A') \mid s \in [0,1) \} \leq \inf \{ \mu_s(T') \mid s \in [0,1) \},
\]
where $A' = A P^\perp \in \A P^\perp$ and $T' = P^\perp T P^\perp \in P^\perp \M P^\perp$.
\end{lem}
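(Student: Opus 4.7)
The strategy is to realise $UTV$ in the form $WY$, where $W \in \U(\M)$ and $Y$ is a positive operator unitarily equivalent to $T$ that commutes with $P$. Complete dominance is equivalent to the operator inequality $A \leq \|A\|\,1_\M \leq T$, and this is what makes $AP$ small enough to appear as the $(P,P)$-corner of such a product via a Halmos-type unitary dilation.

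Fix \emph{any} projection $P \in \Proj(\A)$ with $\tau(P) = 1/2$; no particular choice is required. Since $P\M P$ and $P^\perp\M P^\perp$ are themselves \II1 factors, choose positive invertible elements $Y_1 \in P\M P$ and $Y_2 \in P^\perp\M P^\perp$ whose spectral distributions with respect to the normalised traces of those corners both equal $m_T$. Then $Y := Y_1 + Y_2$ commutes with $P$ and has spectral distribution $m_T$ in $\M$; by Kamei's theorem together with the fact that the unitary orbit of a self-adjoint element of a \II1 factor coincides with its norm closure, there is a unitary $V \in \U(\M)$ with $V^*TV = Y$. The dominance hypothesis forces $Y_1 \geq \|A\|\,P$ in $P\M P$, so $Y_1$ is invertible there with $\|Y_1^{-1}\| \leq 1/\|A\|$ (the case $A = 0$ is trivial and handled by taking $C := 0$ below).

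Set $C := (AP)\,Y_1^{-1} \in P\M P$; then $\|C\| \leq \|A\|/\inf_s \mu_s(T) \leq 1$. Pick a partial isometry $J \in \M$ with $J^*J = P$ and $JJ^* = P^\perp$, and form the Halmos-type dilation
\[
W \;:=\; \begin{pmatrix} C & (P - CC^*)^{1/2}\,J^* \\ -J\,(P - C^*C)^{1/2} & J\,C^*\,J^* \end{pmatrix}
\]
with respect to the block decomposition $1_\M = P + P^\perp$. The intertwining identity $C(P - C^*C)^{1/2} = (P - CC^*)^{1/2}C$, from Borel functional calculus applied to $C^*C$ and $CC^*$, shows $W \in \U(\M)$ with $PWP = C$. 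Setting $X := WY$, the commutation of $Y$ with $P$ and $Y_1 = PYP$ give
\[
PXP \;=\; PWP\cdot Y_1 \;=\; C\,Y_1 \;=\; AP,
\]
while $|X|^2 = YW^*WY = Y^2$ forces $|X| = Y$. Taking $U := WV^* \in \U(\M)$ yields $X = W(V^*TV) = UTV$, as required.

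The residual complete dominance is automatic: because $T \geq (\inf_s \mu_s(T))\,1_\M$, compression gives $T' = P^\perp T P^\perp \geq (\inf_s \mu_s(T))\,P^\perp$ in $P^\perp\M P^\perp$, whence
\[
\sup_s \mu_s(A') \;=\; \|AP^\perp\| \;\leq\; \|A\| \;\leq\; \inf_s \mu_s(T) \;\leq\; \inf_s \mu_s(T').
\]
The only non-routine step is the appeal to Kamei's theorem to pass from equi-distribution of $Y$ and $T$ to an \emph{actual} unitary $V$ with $V^*TV = Y$; the rest is a direct computation with the explicit Halmos dilation.
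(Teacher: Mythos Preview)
Your argument has a genuine gap at the step where you pass from ``$Y$ and $T$ are equi-distributed'' to an \emph{actual} unitary $V$ with $V^*TV = Y$. The assertion that the unitary orbit of a self-adjoint element in a \II1 factor coincides with its norm closure is false. If $\A_1$ and $\A_2$ are non-conjugate MASAs in $\M$ (for instance one Cartan and one singular, which coexist already in the hyperfinite factor) and $S \in \A_1$, $T_0 \in \A_2$ are generators with the same diffuse spectral distribution, then any unitary with $USU^* = T_0$ would conjugate $\A_1 = W^*(S)$ onto $\A_2 = W^*(T_0)$, a contradiction; yet $T_0 \in \O(S)$ by Kamei's theorem. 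Since you manufacture $Y$ abstractly by choosing $Y_1,Y_2$ in the corners with distribution $m_T$, rather than as a genuine conjugate of $T$, you can only conclude $X \in \N(T)$, not $X = UTV$ for honest unitaries. The paper avoids this entirely by first picking a projection $Q$ that \emph{commutes with $T$} and a unitary $W$ carrying $Q$ to $P$, so that $S = WTW^*$ is automatically an exact conjugate of $T$ commuting with $P$.

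There is a second problem, coming from a mismatch between the literal statement and its intended content. As written, $T' = P^\perp T P^\perp$ refers to the original $T$, and under that reading your last paragraph is correct but vacuous: it uses nothing about $U$ or $V$, and follows immediately from $T \geq (\inf_s \mu_s(T))1_\M$. However, the paper's own proof and the iteration in Proposition~\ref{prop:complete-dominance} show that what is really required is residual complete dominance for $T' = P^\perp UTV P^\perp$, the corner of the \emph{new} operator. For that reading your construction breaks down: the $(P^\perp,P^\perp)$-block of $WY$ is $JC^*J^*Y_2$, and if $AP$ fails to be invertible in $P\M P$ (which is entirely possible for an arbitrary $P \in \Proj(\A)$) then $C$ is not bounded below, so $\inf_s \mu_s(JC^*J^*Y_2) = 0$ while $AP^\perp$ need not vanish. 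The paper's specific choices $P = e_A([0,1/2))$ and $Q = e_T([1/2,1))$ are not decorative: they force $AP$ to carry the \emph{large} half of $A$'s singular values and $AP^\perp$ the small half, and they force the block playing the role of your $Y_1$ to satisfy $\|S_1\| = \mu_{1/2}(T)$ while $S_2$ has all singular values at least $\mu_{1/2}(T)$. These median relations are exactly what drive the lower bound $\mu_s(H^*S_2) \geq \mu_{1/2}(A) \geq \mu_t(A_2)$ in the paper's argument, and there is no analogue of them when $P$ is arbitrary and $Y_1,Y_2$ are both distributed like $T$.
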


\begin{proof}
Note $\mu_s(T) \neq 0$ for all $s \in [0,1)$ as $T$ is invertible.

Let $P = e_A([0,1/2))$, where $e_A$ is the projection-valued measure from Proposition \ref{prop:special-spectral-measure}. Then letting $A_1 = AP \in P \M P$ and $A_2 = AP^\perp \in P^\perp \M P^\perp$, it follows from Remark \ref{rem:iso-to-singular-value-function} and Lemma \ref{lem:rearrangement-compression} that 
\[
\mu_s(A_1) = \mu_{s/2}(A), \quad \forall s \in [0,1),
\]
and
\[
\mu_s(A_2) = \mu_{(1+s)/2}(A), \quad \forall s \in [0,1).
\]
Note that the singular values of $A_1$ and $A_2$ are precisely the singular values of $A$ along the intervals $[0,1/2)$ and $[1/2,1)$ respectively.

Let $Q = e_T([1/2,1))$. Then letting $T_1 = TQ \in Q \M Q$ and $T_2 = TQ^\perp \in Q^\perp \M Q^\perp$, we similarly have
\[
\mu_s(T_1) = \mu_{(1 + s)/2}(T), \quad \forall s \in [0,1),
\]
and
\[
\mu_s(T_2) = \mu_{s/2}(T), \quad \forall s \in [0,1).
\]
Note similarly that the singular values of $T_1$ and $T_2$ are precisely the singular values of $T$ along the intervals $[1/2,1)$ and $[0,1/2)$ respectively.

Since $\tau(P) = \tau(Q)$, there is a unitary $W \in \U(\M)$ conjugating $Q$ onto $P$ and $Q^\perp$ onto $P^\perp$. Let $S = WTW^* \in \M$.  Since $\tau(P) = 1/2$, if $\N = P \M P$ then $\M$ and $\M_2(\N)$ are isomorphic in such a way that we can write
\[
P = \left( \begin{matrix}
1_\N & 0 \\
0 & 0
\end{matrix} \right),
\quad
A = \left( \begin{matrix}
A_1 & 0 \\
0 & A_2
\end{matrix} \right),
\quad \text{and}
\quad
S = \left( \begin{matrix}
S_1 & 0 \\
0 & S_2
\end{matrix} \right),
\]
where $S_1, S_2 \in \N$ are positive elements such that
\[
\quad \mu_s(S_1) = \mu_s(T_1)
\quad \text{and}
\quad \mu_s(S_2) = \mu_s(T_2),
\]
for all $s \in [0,1)$.

Since $T$ is invertible, $S_1$ is invertible. Let
\[
H = A_1(S_1)^{-1} \in \N.
\]
Then $H$ is a contraction since the assumption of complete dominance (\ref{eq:complete-dominance-1-1}) implies
\begin{align*}
\| H \| &\leq \| A_1 \| \|(S_1)^{-1} \| \\
&= \mu_0(A) \inf \{ \mu_s(S_1) \mid s \in [0,1) \}^{-1} \\
&\leq 1.
\end{align*}

Let
\[
V = 
\left( \begin{matrix}
H & \sqrt{1-HH^*} \\
\sqrt{1 - H^*H} & -H^*
\end{matrix} \right).
\]
Then $V$ is a unitary since $H$ is a contraction.

Notice
\[
VS =
\left( \begin{matrix}
HS_1 & \ast \\
\ast & -H^*S_2
\end{matrix} \right)
=
\left( \begin{matrix}
A_1 & \ast \\
\ast & -H^*S_2
\end{matrix} \right).
\]
Since $VWTW^* = VS$, $PVSP = AP$ and $\mu_s(-H^*S_2) = \mu_s(H^*S_2)$ for every $s \in [0,1)$, it remains to show that
\[
\sup \{ \mu_s(A_2) \mid s \in [0,1) \} \leq \inf \{ \mu_s(H^*S_2) \mid s \in [0,1) \}.
\]

By construction,
\begin{align*}
HH^* &= A_1(S_1)^{-2}A_1 \\
&\geq \mu_0(S_1)^{-2}\mu_1(A_1)^2 1_{\M} \\
&= \mu_{1/2}(S)^{-2} \mu_{1/2}(A)^2 1_{\M},
\end{align*}
where we have used the fact that
\[
\mu_0(S_1) = \mu_0(T_1) = \mu_{1/2}(T) = \mu_{1/2}(S).
\]
Hence
\[
S_2 HH^* S_2 \geq \mu_{1/2}(S)^{-2} \mu_{1/2}(A)^2 S_2^2,
\]
which implies
\[
|H^* S_2| \geq \mu_{1/2}(S)^{-1} \mu_{1/2}(A) S_2.
\]

Since $\mu_s(|H^*S_2|) = \mu_s(H^*S_2)$ for every $s \in [0,1)$, it follows that
\begin{align*}
\mu_s(H^* S_2) &\geq \mu_{1/2}(S)^{-1} \mu_{1/2}(A) \mu_s(S_2) \\
&= \mu_{1/2}(S)^{-1} \mu_{1/2}(A) \mu_{s/2}(S) \\
&\geq \mu_{1/2}(A).
\end{align*}
Therefore, for $s,t \in [0,1)$,
\[
\mu_s(H^*S_2) \geq \mu_{(1+t)/2}(A) = \mu_t(A_2). \qedhere
\]
\end{proof}

\begin{lem} \label{lem:complete-dominance-2}
Let $\M$ be a \II1 factor with trace $\tau$, let $\A$ be a MASA in $\M$, let $E_{\A} : \M \to \A$ denote the normal conditional expectation onto $\A$, and let $T \in \M$ be positive. Then there are unitaries $U,V \in \U(\M)$ such that $E_{\A}(UVTV^*) = 0$. 
\end{lem}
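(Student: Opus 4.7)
The plan is to put $T$ into $2\times 2$ block-diagonal form with respect to a trace-$1/2$ projection in $\A$, and then apply an off-diagonal swap unitary to annihilate the diagonal blocks. Since $\A$ is a MASA in a \II1 factor it is diffuse, so there exists a projection $P \in \A$ with $\tau(P) = 1/2$. Applying Proposition~\ref{prop:special-spectral-measure} to the positive element $T$, the projection $Q = e_T([0,1/2))$ satisfies $\tau(Q) = 1/2$ and commutes with $T$. Because $\M$ is a factor, $P$ and $Q$ are unitarily equivalent, so there is $V \in \U(\M)$ with $VQV^* = P$. Setting $S := VTV^*$ we obtain a positive element commuting with $P$, and identifying $\M \cong M_2(P\M P)$ via $1_\M = P + P^\perp$ we may write
\[
S = \begin{pmatrix} S_1 & 0 \\ 0 & S_2 \end{pmatrix}
\]
for positive $S_1, S_2 \in P\M P$.

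Next, take $U \in \U(\M)$ to be the self-adjoint swap unitary
\[
U = \begin{pmatrix} 0 & 1_{P\M P} \\ 1_{P\M P} & 0 \end{pmatrix},
\]
so that
\[
UVTV^* = US = \begin{pmatrix} 0 & S_2 \\ S_1 & 0 \end{pmatrix}
\]
has no diagonal blocks in the decomposition induced by $P$. Since $P, P^\perp \in \A$ and $E_\A$ is $\A$-bimodular, for every $X \in \M$ we have
\[
E_\A(PXP^\perp) = P\, E_\A(X)\, P^\perp = 0,
\]
where the last equality uses that $E_\A(X) \in \A$ commutes with $P$ and $PP^\perp = 0$; analogously $E_\A(P^\perp X P) = 0$. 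Applied to $X = UVTV^*$, which has zero diagonal blocks, this yields $E_\A(UVTV^*) = 0$.

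There is no substantive obstacle in this argument, as its two inputs are already in hand: Proposition~\ref{prop:special-spectral-measure} supplies a trace-$1/2$ projection commuting with the positive element $T$, and factoriality of $\M$ allows this projection to be conjugated into $\A$. The only genuine trick is the choice of swap unitary $U$, which sends a block-diagonal positive operator to an off-diagonal one and thereby guarantees that $E_\A$ kills the result.
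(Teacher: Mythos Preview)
Your proof is correct and follows essentially the same approach as the paper: conjugate $T$ so that it commutes with a trace-$1/2$ projection $P \in \A$, then multiply by a unitary $U$ satisfying $PUP = P^\perp U P^\perp = 0$ so that the result has vanishing diagonal blocks, whence the $\A$-bimodularity of $E_\A$ forces $E_\A(UVTV^*)=0$. The only cosmetic difference is that you work explicitly in the $M_2(P\M P)$ picture and take $U$ to be the swap matrix, whereas the paper just chooses any unitary conjugating $P$ onto $P^\perp$ (which automatically has zero diagonal blocks); these are the same idea.
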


\begin{proof}
Let $Q = e_T([0,1/2))$ and let $P \in \Proj(\A)$ be any projection with $\tau(P) = 1/2$. Since $\tau(P) = \tau(Q)$, there is a unitary $V \in \U(\M)$ that conjugates $Q$ onto $P$. Similarly, since $\tau(P) = \tau(P^\perp)$, there is a unitary $U \in \U(\M)$ that conjugates $P$ onto $P^\perp$. Observe that $PUP = P^\perp U P^\perp = 0$.

Since $T$ commutes with $Q$, $VTV^*$ commutes with $P$. Thus,
\begin{align*}
PE_{\A}(UVTV^*) &= PE_{\A}(UVTV^*)P \\
&= E_{\A}(PUVTV^*P) \\
&= E_{\A}(PUPVTV^*) \\
&= 0.
\end{align*}
Similarly, $P^\perp E_{\A}(UVTV^*) = 0$. Hence $E_{\A}(UVTV^*) = 0$.

\end{proof}

\begin{prop} \label{prop:complete-dominance}
Let $\M$ be a \II1 factor with trace $\tau$, let $\A$ be a MASA in $\M$, and let $E_{\A} : \M \to \A$ denote the normal conditional expectation onto $\A$. Let $A \in \A$ and $T \in \M$ be positive elements such that $A \submaj T$. If
\[
\sup \{ \mu_s(A) \mid s \in [0,1) \} \leq \inf \{ \mu_s(T) \mid s \in [0,1) \},
\]
then there are unitaries $U,V \in \M$ such that $E_{\A}(UTV) = A$.
\end{prop}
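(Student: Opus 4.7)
My plan is to iterate Lemma \ref{lem:complete-dominance-1} to peel off a sub-projection of trace $2^{-k}$ at each step $k$, to use Lemma \ref{lem:complete-dominance-2} to finish in the residual corner when $A$ vanishes there, and, when the iteration runs indefinitely, to extract a limit in $\|\cdot\|_2$ that can be verified to preserve singular values. First I reduce to the case that $T$ is invertible: if $\inf_s \mu_s(T) = 0$, then complete dominance forces $A = 0$, and Lemma \ref{lem:complete-dominance-2} applied to $T$ produces unitaries $U, V$ with $E_{\A}(UVTV^*) = 0 = A$, and $UVTV^* = (UV) T (V^*)$ lies in $\N(T)$.

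\textbf{Iteration.} Applying Lemma \ref{lem:complete-dominance-1} to $(A, T)$ gives $P_1 \in \A$ of trace $1/2$, unitaries $U_1, V_1 \in \U(\M)$ with $P_1 U_1 T V_1 P_1 = A P_1$, and the complete dominance inherited by the pair $(A P_1^\perp, T_1)$ in $P_1^\perp \M P_1^\perp$, where $T_1$ denotes the $P_1^\perp$-compression of $U_1 T V_1$. The element $T_1$ is not necessarily positive, but its polar decomposition $T_1 = W_1 |T_1|$ has $W_1$ a unitary of the corner whenever $T_1$ is invertible there; from the explicit form $T_1 = -H^* S_2$ appearing in the proof of Lemma \ref{lem:complete-dominance-1}, this is the case precisely when $A$ is ``invertible on its top half''. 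Absorbing $W_1^*$ into a fresh left-unitary transforms the residual problem into one for the positive element $|T_1|$, so Lemma \ref{lem:complete-dominance-1} applies again in the smaller corner. Iterating, I obtain orthogonal projections $P_k \in \A$ with $\tau(P_k) = 2^{-k}$ and unitaries $\widetilde U_k, \widetilde V_k \in \U(\M)$ acting as the identity on $Q_{k-1} := P_1 + \cdots + P_{k-1}$, such that the sequence $X_n := \widetilde U_n \cdots \widetilde U_1 \, T \, \widetilde V_1 \cdots \widetilde V_n$ satisfies $P_k X_n P_k = A P_k$, and hence $P_k E_{\A}(X_n) = A P_k$, for every $k \leq n$. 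The only way the iteration can fail to continue is if $A$ vanishes on the residual corner, which by complete dominance is the only alternative to the required invertibility; in that event, a single application of Lemma \ref{lem:complete-dominance-2} inside the corner zeros the residual diagonal and the construction terminates in finitely many steps.

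\textbf{Limit and main obstacle.} If the iteration runs indefinitely, the estimate $\|\widetilde U_{k+1} - 1\|_2 \leq 2\sqrt{\tau(Q_k^\perp)} = 2^{1 - k/2}$ together with the analogous bound for $\widetilde V_{k+1}$ yields $\|X_{n+1} - X_n\|_2 \leq C \|T\| \cdot 2^{-n/2}$, so $(X_n)$ is Cauchy in $\|\cdot\|_2$ with limit $X \in \M$ (lying in $\M$ because the sequence is operator-norm bounded by $\|T\|$). Since $\|A - E_{\A}(X_n)\|_2 = \|A Q_n^\perp - E_{\A}(Q_n^\perp X_n Q_n^\perp)\|_2 \leq (\|A\| + \|T\|) 2^{-n/2} \to 0$ and $E_{\A}$ is $\|\cdot\|_2$-contractive, one obtains $E_{\A}(X) = A$. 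The main obstacle is verifying $X \in \N(T)$: by Theorem \ref{thm:two-sided-unitary-orbit} this requires $\mu_s(X) = \mu_s(T)$ for every $s$, but $\N(T)$ is defined as an operator-norm closure (needed for continuity of $\mu_s$), whereas only $\|\cdot\|_2$-convergence is available. I would bridge this gap by noting that each $X_n$ lies in the orbit $\{UTV : U,V \in \U(\M)\}$, so $\mu_s(X_n) = \mu_s(T)$ for every $s$; then operator-norm boundedness upgrades $\|\cdot\|_2$-convergence to ultrastrong convergence, so $|X_n| \to |X|$ ultrastrongly and with uniform operator norm bound. By continuous functional calculus, the spectral distributions $m_{|X_n|}$ converge weakly to $m_{|X|}$; since $m_{|X_n|} = m_{|T|}$ for every $n$, we conclude $m_{|X|} = m_{|T|}$, so $\mu_s(X) = \mu_s(T)$ for every $s$ and $X \in \N(T)$.
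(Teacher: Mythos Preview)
Your iteration and termination argument follow the paper's proof essentially verbatim, and you even catch (and correctly patch via polar decomposition) the minor positivity issue in re-applying Lemma \ref{lem:complete-dominance-1} to the residual corner, which the paper's write-up glosses over. The divergence comes at the limit step, and there it costs you the actual conclusion of the proposition.

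The proposition asserts the existence of genuine unitaries $U,V\in\U(\M)$ with $E_{\A}(UTV)=A$, not merely of some $X\in\N(T)$ with $E_{\A}(X)=A$. Your argument delivers only the latter: you pass to the $\|\cdot\|_2$-limit of the products $X_n$ and then expend effort showing $\mu_s(X)=\mu_s(T)$ via weak convergence of spectral distributions, which places $X$ in the \emph{closure} $\N(T)$ but does not exhibit it as $UTV$. The paper sidesteps this entirely by taking limits of the two unitary factors separately rather than of their product with $T$. The very estimate you wrote, $\|\widetilde U_{k+1}-1\|_2\le 2^{1-k/2}$, shows that the partial products $\widetilde U_n\cdots\widetilde U_1$ and $\widetilde V_1\cdots\widetilde V_n$ are themselves Cauchy in $\|\cdot\|_2$; on norm-bounded sets $\|\cdot\|_2$-convergence coincides with $*$-strong convergence, and a $*$-strong limit of unitaries is unitary. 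Hence one obtains actual unitaries $U,V\in\U(\M)$ with $UTV=\lim_n X_n$, and the spectral-distribution detour (together with the worry about norm versus $\|\cdot\|_2$ closures) evaporates. Replace your final paragraph with this observation and the proof matches the proposition as stated.
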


\begin{proof}
First, if $T$ is not invertible, then 
\[
\inf \{ \mu_s(T) \mid s \in [0,1) \} = 0,
\]
which would imply that $A = 0$. In this case, the result would follow immediately from Lemma \ref{lem:complete-dominance-2}.

Assume that $T$ is invertible. By Lemma \ref{lem:complete-dominance-1} there is a projection $P_1 \in \Proj(\A)$ with $\tau(P_1) = 1/2$, and unitaries $U_1,V_1 \in \U(\M)$ such that, setting $T_1 = U_1TV_1$, 
\[
P_1T_1P_1 = AP_1
\]
and
\[
\sup \{ \mu_s(A P_1^\perp) \mid s \in [0,1) \} \leq \inf \{ \mu_s(P_1^\perp T_1 P_1^\perp) \mid s \in [0,1) \}.
\]

Now consider $A P_1^\perp$ and $P_1^\perp T_1 P_1^\perp$. If $P_1^\perp T_1 P_1^\perp$ is not invertible, then as above, $A P_1^\perp = 0$, and we could apply Lemma \ref{lem:complete-dominance-2} to complete the argument. 

Otherwise, if $P_1^\perp T_1 P_1^\perp$ is invertible, then by Lemma \ref{lem:complete-dominance-1} there is a projection $P_2 \in \Proj(\A)$ with $P_2 \geq P_1$ and $\tau(P_2) = 3/4$, and unitaries $U_2,V_2 \in \U(\M)$ with $P_1U_2 = P_1 = U_2P_1$ and $P_1V_2 = P_1 = V_2P_1$ such that, setting $T_2 = U_2T_1V_2$,
\[
P_kT_2P_k = AP_k, \quad k=1,2,
\]
and
\[
\sup \{ \mu_s(A P_2^\perp) \mid s \in [0,1) \} \leq \inf \{ \mu_s(P_2^\perp T_2 P_2^\perp) \mid s \in [0,1) \}.
\]

We can repeat this process to obtain a sequence of projections $P_n \in \Proj(\A)$ with $P_{n+1} \geq P_n$ and $\tau(P_n) = 1 - 1/2^n$, and sequences of unitaries $U_n,V_n \in \U(\M)$ with $P_nU_{n+1} = P_n = U_{n+1}P_n$ and $P_nV_{n+1} = P_n = V_{n+1}P_n$ such that, setting $T_{n+1} = U_{n+1}T_nV_{n+1}$,
\[
P_kT_nP_k = AP_k, \quad 1 \leq k \leq n,
\]
and
\[
\sup \{ \mu_s(A P_n^\perp) \mid s \in [0,1) \} \leq \inf \{ \mu_s(P_n^\perp T_n P_n^\perp) \mid s \in [0,1) \}.
\]
If these sequences are finite, then the process terminates after an application of Lemma \ref{lem:complete-dominance-2} and the argument is complete.

Suppose these sequences are infinite. Then since
\[
\|1 - U_{n+1}\|_2 = \|P_n^\perp - P_n^\perp U_{n+1} P_n^\perp\|_2 \leq 2\|P_n^\perp\|_2 \leq 1/2^{n-1},
\]
it is easy to verify that the sequences of unitaries $(U_n \cdots U_1)_{n=1}^\infty$ and $(V_1 \cdots V_n)_{n=1}^\infty$ are Cauchy in the $2$-norm on $\M$. Hence they converge strongly to unitaries $U,V \in \M$ respectively.

Now for every $k \geq 1$,
\begin{align*}
P_kE_{\A}(UTV) &= \sotlim_{n \to \infty} P_kE_{\A}(T_n)P_k \\
&= \sotlim_{n \to \infty} E_{\A}(P_kT_nP_k) \\
&= \sotlim_{n \to \infty}(AP_k) \\
&= AP_k. 
\end{align*}
It follows that $E_{\A}(USV) = A$.
\end{proof}

\subsection{The case of strict dominance} \label{sec:strict-dominance}

\begin{lem} \label{lem:strict-dominance}
Let $\M$ be a \II1 factor and let $\A$ be a MASA in $\M$. Let $A \in \A$ and $T \in \M$ be positive elements and suppose there is a constant $\delta > 0$ such that $\mu_s(A) + \delta \leq \mu_s(T)$ for all $s \in [0,1)$. Then there are countably many disjoint intervals $(I_n)_{n=1}^\infty$ with $I_n = [a_n, b_n) \subset [0,1)$ and $a_n < b_n$ such that $\cup_{n \geq 1} I_n = [0,1)$ and 
\[
\sup \{ \mu_s(A) \mid s \in I_n \} \leq \inf \{ \mu_s(T) \mid s \in I_n \}, \qquad n\geq 1.
\]
\end{lem}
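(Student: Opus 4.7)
Plan: Since $\mu_s(A)$ is non-increasing and right-continuous on $[0,1)$ with values in $[0, M]$, where $M := \|A\| = \mu_0(A)$, the natural idea is to partition $[0,1)$ into pieces on which $\mu_s(A)$ oscillates by at most $\delta$; combined with the strict-dominance hypothesis, this forces $\sup_{I_n}\mu_s(A) \leq \inf_{I_n}\mu_s(T)$ on each piece.

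Concretely, let $K$ be the least positive integer with $M - K\delta < 0$, and for $k = 1, \ldots, K$ set
\[
s_k = \sup\{s \in [0,1) : \mu_s(A) > M - k\delta\},
\]
and $s_0 := 0$. Monotonicity and right-continuity of $\mu_s(A)$ imply that each superlevel set $\{s : \mu_s(A) > y\}$ equals $[0, s_y)$, so $0 = s_0 \leq s_1 \leq \cdots \leq s_{K-1} \leq s_K = 1$; the last equality holds because $\mu_s(A) \geq 0 > M - K\delta$ for every $s$. Put $J_k := [s_k, s_{k+1})$ for $0 \leq k \leq K-1$ and discard any with $s_k = s_{k+1}$. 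For $s \in J_k$, the inclusion $s \in [0, s_{k+1})$ gives $\mu_s(A) > M - (k+1)\delta$, and the exclusion $s \notin [0, s_k)$ gives $\mu_s(A) \leq M - k\delta$.

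Combining with the hypothesis $\mu_s(A) + \delta \leq \mu_s(T)$, one obtains
\[
\sup_{s \in J_k} \mu_s(A) \leq M - k\delta \leq \inf_{s \in J_k}\bigl(\mu_s(A) + \delta\bigr) \leq \inf_{s \in J_k} \mu_s(T),
\]
which is the desired inequality. This produces a finite partition into at most $K$ intervals; to present it as a countably infinite family $(I_n)_{n=1}^{\infty}$, I would further split one of the pieces into a sequence of half-open subintervals, noting that subdivision only decreases $\sup\mu_s(A)$ and increases $\inf\mu_s(T)$ on each sub-piece.

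I do not foresee a real obstacle here. The only delicate point is identifying the superlevel sets as half-open intervals $[0, s_y)$, which relies on both monotonicity (for the down-set property) and right-continuity (to rule out the right endpoint being attained).
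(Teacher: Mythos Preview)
Your argument is correct and genuinely different from the paper's. The paper defines an interval $[a,b)$ to be ``good'' when $\sup_{[a,b)}\mu_s(A)\le\inf_{[a,b)}\mu_s(T)$, observes via right-continuity and the $\delta$-gap that from any point $c\in[0,1)$ one can extend to the right by a good interval $[c,d)$, and then invokes Zorn's lemma on families of disjoint good intervals whose union is an initial segment $[0,c)$ to obtain a maximal family, which must exhaust $[0,1)$; countability follows because every interval has positive length.

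Your approach is direct and constructive: you slice the range $[0,M]$ of $\mu_s(A)$ at heights $M-k\delta$ and use monotonicity and right-continuity to identify the superlevel sets $\{\mu_s(A)>M-k\delta\}$ as half-open intervals $[0,s_k)$, yielding a \emph{finite} partition into at most $K=\lceil M/\delta\rceil$ pieces on each of which the oscillation of $\mu_s(A)$ is at most $\delta$; the strict-dominance hypothesis then immediately gives the desired inequality. This avoids Zorn entirely and produces an explicit bound on the number of pieces, which is strictly more information than the paper extracts. The subdivision step to manufacture an infinite sequence is harmless, as you note, since the ``good'' property is inherited by subintervals. The paper's route is more flexible in principle (it would work under weaker hypotheses guaranteeing local goodness), but for this lemma your level-set partition is both simpler and sharper.
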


\begin{proof}
For $a,b \in [0,1)$ with $a < b$, we will temporarily say that the interval $[a,b)$ is {\em good} if
\[
\sup \{ \mu_s(A) \mid s \in [a,b) \} \leq \inf \{ \mu_s(T) \mid s\in [a,b) \}.
\]
Let $\X$ denote the collection of all families $\F$ consisting of disjoint good subintervals of $[0,1)$ such that $\cup_{I \in \F} I = [0,c)$ for some $c \in (0,1)$.

By the right-continuity of $\mu_t(A)$ and $\mu_t(T)$, combined with the fact that $\mu_t(A) + \delta \leq \mu_t(S)$ for all $t \in [0,1)$, it follows easily that there is $d \in (0,1)$ such that the interval $[0,d)$ is good. Hence $\X$ is non-empty.

Order $\X$ by inclusion and let $(\F_\lambda)_{\lambda \in \Lambda}$ be an increasing chain of families in $\X$. Taking $\F_0 = \cup_{\lambda \in \Lambda} \F_\lambda$, it follows immediately that $\F_0$ is an upper bound of $(\F_\lambda)_{\lambda \in \Lambda}$ in $\X$. Hence we can apply Zorn's Lemma to obtain a maximal family $\F_m \in \X$.

Write $\cup_{I \in \F_m} I = [0,c)$.  If $c < 1$, then we can argue as before to find a good interval $[c, d) \subset [0,1)$. Since the family $\F_m \cup \{ [c,d) \}$ would belong to $\X$, this would contradict the maximality of $\F_m$. Hence $c = 1$.

We conclude by observing that the family $\F_m$ is countable since the length of each interval in $\F_m$ is non-zero by construction.
\end{proof}

\begin{prop} \label{prop:strict-dominance}
Let $\M$ be a \II1 factor with trace $\tau$, let $\A$ be a MASA in $\M$, and let $E_{\A} : \M \to \A$ denote the normal conditional expectation onto $\A$. Let $A \in \A$ and $T \in \M$ be positive elements and suppose there is a constant $\delta > 0$ such that $\mu_s(A) + \delta \leq \mu_s(T)$ for all $s \in [0,1)$. Then there are unitaries $U,V \in \M$ such that $E_{\A}(UTV) = A$.
\end{prop}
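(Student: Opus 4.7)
The plan is to use Lemma \ref{lem:strict-dominance} to partition $[0,1)$ into subintervals on which complete dominance holds, and then apply Proposition \ref{prop:complete-dominance} block-by-block in the corresponding corners of $\M$.

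First I would invoke Lemma \ref{lem:strict-dominance} to produce a countable family of disjoint half-open intervals $\{I_n\}$ partitioning $[0,1)$ with
\[
\sup\{\mu_s(A) : s \in I_n\} \le \inf\{\mu_s(T) : s \in I_n\}
\]
for each $n$. I would then define $P_n := e_A(I_n) \in \A$ and $Q_n := e_T(I_n) \in \M$ using Proposition \ref{prop:special-spectral-measure}. These are mutually orthogonal families of projections, each summing to $1_\M$, with $\tau(P_n) = \tau(Q_n) = |I_n|$. Standard theory of projections in a \II1 factor produces a unitary $W \in \U(\M)$ satisfying $WQ_nW^* = P_n$ for every $n$. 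Setting $S := WTW^*$, this element has the same singular value function as $T$ and, crucially, commutes with every $P_n$.

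Next I would pass to the corner $P_n \M P_n$, a \II1 factor with MASA $\A P_n$ under the normalized trace $\tau/\tau(P_n)$. Combining Remark \ref{rem:iso-to-singular-value-function} with the rescaling argument of Lemma \ref{lem:rearrangement-compression}, the singular value functions of $AP_n$ and $P_n S P_n$ computed inside $P_n\M P_n$ are obtained by rescaling $\mu_s(A)|_{I_n}$ and $\mu_s(T)|_{I_n}$ linearly onto $[0,1)$. The complete dominance on $I_n$ therefore becomes complete dominance in the corner, so Proposition \ref{prop:complete-dominance} yields unitaries $U_n, V_n \in \U(P_n \M P_n)$ with
\[
E_{\A P_n}\bigl(U_n (P_n S P_n) V_n\bigr) = AP_n.
\]

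Finally I would assemble $U := \sum_n U_n$ and $V := \sum_n V_n$. Each $U_n$ is a partial isometry with initial and final projection $P_n$, and since $\sum_n P_n = 1_\M$, the partial sums extended by the identity on the complementary projections are unitaries converging in the strong operator topology; hence $U$ and $V$ are unitaries in $\M$. Because $S$ commutes with every $P_n$, block orthogonality gives $USV = \sum_n U_n (P_n S P_n) V_n$, and applying $E_\A$ together with $E_\A|_{P_n \M P_n} = E_{\A P_n}$ yields $E_\A(USV) = \sum_n AP_n = A$. Rewriting $USV = (UW)T(W^*V)$ then produces the desired unitaries. I expect the main point requiring care to be verifying that the linear rescaling on each $I_n$ genuinely transports complete dominance to the corner $P_n\M P_n$; once that is checked the rest is bookkeeping plus an appeal to Proposition \ref{prop:complete-dominance}.
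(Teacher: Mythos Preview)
Your proposal is correct and follows essentially the same route as the paper: invoke Lemma~\ref{lem:strict-dominance}, set $P_n=e_A(I_n)$, $Q_n=e_T(I_n)$, conjugate by a unitary $W$ so that $S=WTW^*$ commutes with the $P_n$, apply Proposition~\ref{prop:complete-dominance} in each corner, and then assemble the block unitaries. The only cosmetic difference is that the paper cites Lemma~\ref{lem:rearrangement-range} (which handles arbitrary Borel sets) rather than the rescaling argument of Lemma~\ref{lem:rearrangement-compression} to transport complete dominance into the corners; since the $I_n$ are genuine intervals your linear-rescaling justification works just as well.
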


\begin{proof}
Let $(I_n)_{n=1}^\infty$ be a countable family of disjoint intervals constructed as in Lemma \ref{lem:strict-dominance}. For each $n \geq 1$, let $P_n = e_A(I_n)$ and $Q_n = e_T(I_n)$. Since $\tau(P_n) = \tau(Q_n)$, there is a unitary $W \in \U(\M)$ that conjugates $Q_n$ onto $P_n$ for each $n \geq 1$. Let $S = WTW^*$ and note that since $T$ commutes with each $Q_n$, $S$ commutes with each $P_n$. 

For each $n \geq 1$, let $\M_n = P_n \M P_n$, $\A_n = \A P_n$, $A_n = AP_n \in \A_n$, and $S_n = SP_n \in \M_n$. Then for each $n \geq 1$, the construction of $A_n$ and $S_n$ combined with Remark \ref{rem:iso-to-singular-value-function} and Lemma  \ref{lem:rearrangement-range} implies that
\[
\sup \{ \mu_s(A_n) \mid s \in [0,1) \} \leq \inf \{ \mu_s(S_n) \mid s \in [0,1) \}.
\]
Hence by Proposition \ref{prop:complete-dominance}, for each $n \geq 1$ there are unitaries $U_n,V_n \in \U(\M_n)$ such that $E_{\A_n}(U_nT_nV_n) = A_n$. Letting $U = \oplus_{n = 1}^\infty U_n$ and $V = \oplus_{n = 1}^\infty V_n$, it follows from above that $E_{\A}(UWTW^*V) = A$.
\end{proof}

\subsection{The case of dominance}

The following lemma is an immediate consequence of Remark \ref{rem:iso-to-singular-value-function} and Lemma \ref{lem:rearrangement-range}.

\begin{lem} \label{lem:dominance}
Let $\M$ be a \II1 factor with trace $\tau$ and let $A,T \in \M$ be positive elements. Let $\X \subset [0,1)$ be a Borel set with the property that there is $\delta \geq 0$ such that
\[
\mu_s(A) + \delta \leq \mu_s(T), \quad \forall s \in \X.
\]
Let $P = e_A(\X)$ and $Q = e_T(\X)$. Let $A' = AP$ and $T' = TQ$. Then
\[
\mu_s(A') + \delta \leq \mu_s(T'), \quad \forall s \in [0,1).
\]
\end{lem}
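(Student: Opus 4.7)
The plan is to reduce to the case $\delta = 0$ and then apply Lemma \ref{lem:rearrangement-range} via the identifications supplied by Remark \ref{rem:iso-to-singular-value-function}.

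For the reduction, I would replace $A$ by $B := A + \delta 1_\M$, which is still positive. Since $B$ differs from $A$ by a scalar multiple of the identity, one has $\lambda_s(B) = \lambda_s(A) + \delta$, so the projection-valued measure for $B$ from Proposition \ref{prop:special-spectral-measure} may be taken equal to $e_A$; in particular $e_B(\X) = P$ and $\mu_s(B) = \mu_s(A) + \delta$. Moreover $B' := BP = A' + \delta P$, and since $P$ is the unit of $P\M P$, this yields $\mu_s(B') = \mu_s(A') + \delta$. The desired conclusion $\mu_s(A') + \delta \leq \mu_s(T')$ is therefore equivalent to $\mu_s(B') \leq \mu_s(T')$, while the hypothesis becomes $\mu_s(B) \leq \mu_s(T)$ on $\X$. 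Thus it suffices to establish the lemma when $\delta = 0$.

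Assuming $\delta = 0$, I would invoke Remark \ref{rem:iso-to-singular-value-function} to identify the countably generated, diffuse abelian subalgebras $W^*(e_A)$ and $W^*(e_T)$ of $\M$ with $L^\infty([0,1), m)$ via isomorphisms sending $A \mapsto f$ and $T \mapsto g$, where $f = \mu_\bullet(A) = f^*$ and $g = \mu_\bullet(T) = g^*$ are already their own non-increasing rearrangements. Under these identifications the spectral projections $P = e_A(\X)$ and $Q = e_T(\X)$ correspond to $\mathbbm{1}_\X$, so $A' = AP$ and $T' = TQ$ correspond to $f|_\X$ and $g|_\X$ respectively. Because $f = f^*$ and $g = g^*$, the sets $\Y$ and $\Z$ supplied by Lemma \ref{lem:rearrangement-restriction} may both be taken equal to $\X$, and Lemma \ref{lem:rearrangement-range} applied with $\Y = \Z = \X$ and with the hypothesis $f^*|_\X \leq g^*|_\X$ gives $(f|_\X)^* \leq (g|_\X)^*$. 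Applying Remark \ref{rem:iso-to-singular-value-function} once more inside the compressions $P\M P$ and $Q\M Q$ (with their normalized traces) identifies $(f|_\X)^*$ with $\mu_s(A')$ and $(g|_\X)^*$ with $\mu_s(T')$, yielding the result.

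Since the paper explicitly labels this lemma as an immediate consequence of the cited facts, I expect no genuine obstacle. The only mildly non-obvious move is the shift reduction in the first step; beyond that, the argument is purely a matter of transporting the problem to $L^\infty([0,1), m)$, where Lemma \ref{lem:rearrangement-range} applies verbatim.
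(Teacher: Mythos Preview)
Your proposal is correct and follows exactly the route the paper indicates: the lemma is stated as an immediate consequence of Remark \ref{rem:iso-to-singular-value-function} and Lemma \ref{lem:rearrangement-range}, and you carry this out by transporting to $L^\infty([0,1),m)$ and applying Lemma \ref{lem:rearrangement-range} with $\Y=\Z=\X$. The shift reduction to $\delta=0$ is a clean way to absorb the constant (equivalently, one applies Lemma \ref{lem:rearrangement-range} directly to $f+\delta$ and $g$); the only point to make explicit is that the isomorphism $\alpha$ is chosen so that $\alpha(e_A(B))=\mathbbm{1}_B$, which guarantees both $\alpha(A)=\mu_\bullet(A)$ and $\alpha(P)=\mathbbm{1}_\X$ simultaneously.
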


%
%

\begin{prop} \label{prop:dominance}
Let $\M$ be a \II1 factor, let $\A$ be a MASA in $\M$, and let $E_{\A} : \M \to \A$ denote the normal conditional expectation onto $\A$. Let $A \in \A$ and $T \in \M$ be positive elements with the property that $\mu_s(A) \leq \mu_s(T)$ for every $s \in [0,1)$.  Then there exists an $S \in \N(T)$ such that $E_\A(S) = A$.
\end{prop}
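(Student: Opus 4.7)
The plan is to reduce to the strict dominance case (Proposition \ref{prop:strict-dominance}) by partitioning $[0,1)$ into countably many pieces on each of which we have uniform strict dominance, together with one piece of equality which can be handled trivially. Lemma \ref{lem:dominance}, stated just before this proposition, is tailor-made for this reduction.

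Explicitly, I would partition $[0,1)$ as $\X_0 \sqcup \bigsqcup_{n \geq 0} \Y_n$, where
\[
\X_0 = \{s \in [0,1) : \mu_s(A) = \mu_s(T)\}, \qquad \Y_n = \left\{s : \tfrac{1}{n+1} \leq \mu_s(T) - \mu_s(A) < \tfrac{1}{n}\right\},
\]
with the convention $1/0 = \infty$. These sets are Borel since $\mu_\cdot(A)$ and $\mu_\cdot(T)$ are right-continuous. Let $P_0 = e_A(\X_0)$, $P_n = e_A(\Y_n) \in \A$ and $Q_0 = e_T(\X_0)$, $Q_n = e_T(\Y_n) \in \M$; since $\tau(P_k) = \tau(Q_k)$ for each $k$, pick unitaries $W_k \in \U(\M)$ with $W_k Q_k W_k^* = P_k$.

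On the equality block I would take $S_0 := AP_0 \in \A P_0$: trivially $E_{\A P_0}(S_0) = AP_0$, and by Lemma \ref{lem:rearrangement-range} applied on $\X_0$ the singular value functions of $S_0$ and $W_0 T Q_0 W_0^*$ agree inside $P_0 \M P_0$. On each remaining block $P_n \M P_n$, Lemma \ref{lem:dominance} applied with $\X = \Y_n$ and $\delta = \tfrac{1}{n+1}$, combined with the fact that unitary conjugation preserves singular values, gives strict dominance
\[
\mu_s(AP_n) + \tfrac{1}{n+1} \leq \mu_s(W_n T Q_n W_n^*) \qquad \text{for all } s \in [0,1).
\]
Proposition \ref{prop:strict-dominance}, applied in the II$_1$ factor $P_n \M P_n$ with MASA $\A P_n$, then produces unitaries $U_n, V_n \in \U(P_n \M P_n)$ such that $S_n := U_n W_n T Q_n W_n^* V_n$ satisfies $E_{\A P_n}(S_n) = AP_n$.

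Finally I would set $S := S_0 + \sum_{n \geq 0} S_n$, which converges strongly in $\M$ with $\|S\| \leq \|T\|$ since the summands are uniformly bounded and supported on the pairwise orthogonal projections $P_k$. Then $E_\A(S) = A\bigl(\sum_k P_k\bigr) = A$, so it remains to verify $S \in \N(T)$, equivalently $\mu_s(S) = \mu_s(T)$ for all $s$ by Theorem \ref{thm:two-sided-unitary-orbit}. Since $T$ commutes with each $Q_k$, $T$ is itself block diagonal with respect to $\{Q_k\}$, and by construction each block of $S$ has the same normalized singular value function as the corresponding block of $T$; combining the local singular value functions via the non-increasing rearrangement formalism recovers the full $\mu_s(T)$. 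This final combinatorial step is where I expect the main bookkeeping, and I would carry it out via Lemmas \ref{lem:rearrangement-restriction}--\ref{lem:rearrangement-range} of Section \ref{sec:rearrangements}.
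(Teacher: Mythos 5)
Your proposal is correct and is essentially the paper's own argument: the same partition of $[0,1)$ into the equality set and countably many bands of uniform strict dominance, Lemma \ref{lem:dominance} to transfer the dominance to the compressions, and Proposition \ref{prop:strict-dominance} applied in each corner $P_n \M P_n$ (the paper uses a single unitary $W$ conjugating all $Q_n$ onto $P_n$ rather than separate $W_n$, an immaterial difference). The only real divergence is the final step: the paper notes $A_0 \in \O(T'P_0)$ by \cite{AK2006}*{Theorem 5.4} and obtains $S \in \N(T)$ directly as a limit of two-sided unitary translates, whereas you verify $\mu_s(S)=\mu_s(T)$ and invoke Theorem \ref{thm:two-sided-unitary-orbit}; that recombination of block singular value data is true and elementary (the spectral distribution of $|S|$ is the trace-weighted average of those of its blocks), but note it is not literally supplied by Lemmas \ref{lem:rearrangement-restriction}--\ref{lem:rearrangement-range}, which deal only with restrictions.
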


\begin{proof}
Let
\[
\X_0 = \{s \in [0,1) \mid \mu_s(T) = \mu_s(A)\}
\] 
and, for each $n \geq 1$, let
\[
\X_n = \{ s \in [0,1) \mid \left\|T\right\|/(n+1) < \mu_s(T) - \mu_s(A) \leq \left\|T\right\|/n \}.
\]
Note that $\bigcup_{n\geq 0} \X_n = [0,1)$ as $\mu_s(A) \leq \mu_s(T) \leq \left\|T\right\|$ for all $x \in [0,1)$.  Furthermore, each $\X_n$ is Borel since the functions $s \to \mu_s(A)$ and $s \to \mu_s(T)$ are right-continuous on $[0,1)$.

For each $n \geq 0$ let $P_n = e_A(\X_n)$ and $Q_n = e_T(\X_n)$. Since $\tau(P_n) = \tau(Q_n)$, and since the sets $\{P_n \mid n \geq 0\}$ and $\{Q_n \mid n \geq 0\}$ are each pairwise orthogonal, there is a unitary $W \in \U(\M)$ that conjugates each $Q_n$ onto $P_n$.

Let $T' = WTW^*$ and, for each $n \geq 0$, let $\M_n = P_n\M P_n$, $\A_n = A_nP_n$, $A_n = AP_n \in \A_n$, and $S_n = T'P_n \in \M_n$. Note by Lemma \ref{lem:dominance} that $\mu_s(A_0) = \mu_s(S_0)$ for all $s \in [0,1)$ and, for each $n\geq 1$
\[
\mu_{s}(A_n) + \|T\|/(n+1) \leq \mu_{s}(S_n), \quad \forall s \in [0,1).
\]
Hence $A_0$ and $S_0$ are equi-distributed, and thus $A_0 \in \O(S_0)$ in $\M_0$ by \cite{AK2006}*{Theorem 5.4}.  Furthermore, by Proposition \ref{prop:strict-dominance}, there are unitaries $U_n,V_n \in \M_n$ such that $E_{\A_n}(U_nS_nV_n) = A_n$.

Letting $U = \oplus_{n = 1}^\infty U_n \in P_0^\perp \M P_0^\perp$ and $V = \oplus_{n = 1}^\infty V_n \in P_0^\perp \M P_0^\perp$, which are unitaries, it follows from above that $E_{\A}(UP_0^\perp WTW^* P_0^\perp V) = AP_0^\perp$. Taking $S = UP_0^\perp WTW^*P_0^\perp V \oplus A_0 \in \M$, it follows that $S \in \N(T)$ and $E_{\A}(S) = A$.
\end{proof}

\begin{rem} \label{rem:precise-unitaries-in-dominance}
In the proof of Proposition \ref{prop:dominance}, if we make the additional assumption that the set
\[
\{ \mu_s(T) \mid s \in [0,1),\ \mu_s(T) = \mu_s(A) \}
\]
is finite, then $S_0$ and $A_0$ take on only a finite number of singular values, and it is elementary to show that there exists a unitary $U_0 \in \M$ such that $U_0S_0U_0^* = A_0$. Letting $U' = \oplus_{n = 0}^\infty U_n$ and $V' = U_0^* \oplus_{n = 1}^\infty V_n$, which are unitaries in $\M$, it follows that $E_{\A}(U'WTW^*V') = A$. Thus it is possible to construct unitaries $U, V \in \M$ such that $E_{\A}(UTV) = A$.
\end{rem}

\begin{rem}
If one is willing to forego the possibility of constructing explicit unitaries $U,V \in \M$ such that $E_{\A}(UTV) = A$ as in Remark \ref{rem:precise-unitaries-in-dominance}, then there is a slightly quicker proof of Proposition \ref{prop:dominance}, which we now sketch.

Let $\A_0$ denote the diffuse abelian subalgebra of $\A$ generated by $e_A$ as defined in Proposition \ref{prop:special-spectral-measure}. By Remark \ref{rem:iso-to-singular-value-function}, there is an isomorphism $\alpha : \A_0 \to L^\infty([0,1),m)$ such that $\tau = \int_0^1 dx \circ \alpha$ and $\alpha(A)(s) = \mu_s(A)$. Let $f,g,h \in L^\infty([0,1),m)$ denote the functions defined for $s \in [0,1)$ by $f(s) = \mu_s(A)$, $g(s) = \mu_s(T)$, and
\[
h(s) := 
\begin{cases}
\mu_s(A) / \mu_s(T) & \mu_s(T) \ne 0, \\
0 & \text{otherwise}.
\end{cases}
\]
Observe that the assumptions of Proposition \ref{prop:dominance} imply that $0 \leq f \leq g$. Hence $f = gh$.

Let $T_0 = \alpha^{-1}(g) \in \A_0$. Then $\mu_s(T_0) = \mu_s(T)$ for every $ s \in [0,1)$, so $T_0 \in \O(T)$ by Remark \ref{rem:equidistributed-equality-of-singular-values} and \cite{AK2006}*{Theorem 5.4}. Similarly, let $B = \alpha^{-1}(h) \in \A_0$. It is clear that $B$ is a positive contraction such that
\[
T_0B = \alpha^{-1}(gh) = \alpha^{-1}(f) = A.
\]

Choose $\beta \in [0,1]$ such that $\tau(B) = 2\beta - 1$, and let
\[
U_0 = \alpha^{-1}(\mathbbm{1}_{[0,\beta)} - \mathbbm{1}_{[\beta,1)}) \in \M.
\]
It is not difficult to check that $U_0$ is a self-adjoint unitary satisfying $B \prec U_0$. By Remark \ref{rem:equidistributed-equality-of-singular-values} and Theorem \ref{thm:ravichandran}, there is a self-adjoint unitary $U \in \O(U_0)$ such that $E_{\A}(U) = B$. Letting $S = UT_0 \in \N(T)$ gives
\[
E_{\A}(S) = E_{\A}(UT_0) = E_{\A}(U)T_0 = BT_0 = A.
\]
\end{rem}

\subsection{The general case}
We are now ready to prove our analogue of Thompson's theorem for \II1 factors. 

\begin{theorem*}
Let $\M$ be a \II1 factor, let $\A$ be a MASA in $\M$, let $E_{\A} : \M \to \A$ denote the normal conditional expectation onto $\A$, let $T \in \M$, and let $A \in \A$.  Then $A \submaj T$ if and only if there exists $S \in \N(T)$ such that $E_{\A}(S) = A$. 
\end{theorem*}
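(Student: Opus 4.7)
The forward implication follows immediately from previously established tools: if $S \in \N(T)$ with $E_\A(S) = A$, then Theorem \ref{thm:two-sided-unitary-orbit} gives $\mu_s(S) = \mu_s(T)$ and Theorem \ref{thm:expectation-submajorizes} gives $A = E_\A(S) \submaj S$, so $A \submaj T$. For the reverse implication, assume $A \submaj T$; the reduction outlined at the beginning of Section \ref{sec:main} allows us to assume that $A$ and $T$ are both positive. My plan is to reduce submajorization to the pointwise dominance hypothesis of Proposition \ref{prop:dominance}, then bridge the remaining majorization gap using Ravichandran's Schur-Horn theorem.

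The reduction step produces a positive $A' \in \A$ with $\mu_s(A') \leq \mu_s(T)$ for all $s$ and $A \maj A'$. Since submajorization at $t = 1$ gives $\tau(A) \leq \tau(T)$, choose $s^* \in [0,1]$ with $\int_0^{s^*} \mu_t(T)\, dt = \tau(A)$ and define $\phi(s) := \mu_s(T)\mathbbm{1}_{[0,s^*)}(s)$. This function is non-increasing, dominated by $\mu(T)$, and has integral $\tau(A)$, so by Remark \ref{rem:iso-to-singular-value-function} we can realize $\phi$ as the eigenvalue function of a positive $A' \in \A$. The required majorization $A \maj A'$ is verified by splitting at $s^*$: submajorization gives $\int_0^t \mu_s(A)\, ds \leq \int_0^t \mu_s(T)\, ds = \int_0^t \phi(s)\, ds$ for $t \leq s^*$, and for $t \geq s^*$ we have $\int_0^t \phi(s)\, ds = \tau(A) \geq \int_0^t \mu_s(A)\, ds$. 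Proposition \ref{prop:dominance} then supplies $S_0 \in \N(T)$ with $E_\A(S_0) = A'$.

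The main obstacle is the final combination step: producing $S \in \N(T)$ with $E_\A(S) = A$ from $S_0$ and the majorization $A \maj A'$. I would apply Ravichandran's theorem (Theorem \ref{thm:ravichandran}) in $\M$ to the positive self-adjoint pair $A, A'$, which yields $R \in \O(A')$ with $E_\A(R) = A$ together with norm-convergent unitaries $V_n A' V_n^* \to R$. The naive candidates $V_n S_0 V_n^* \in \N(T)$ do not automatically work: the sequence need not converge in norm, and the expectations $E_\A(V_n S_0 V_n^*)$ differ from $V_n A' V_n^*$ by an error term $E_\A(V_n(S_0 - A')V_n^*)$ which vanishes only when $V_n$ lies in the normalizer $N_\M(\A)$. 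To resolve this, I would adopt the strategy sketched after Remark \ref{rem:precise-unitaries-in-dominance}: transport $T$ to an equidistributed element $T_0 \in \A$, write $S_0 = U_0 T_0$ for a self-adjoint unitary $U_0 \in \M$ with $E_\A(U_0) = A'T_0^{-1}$, and then seek $S$ in the form $UT_0V$ for unitaries $U, V \in \U(\M)$ designed so that $|S|$ is equidistributed with $T$ while $E_\A(UT_0V) = A$. Translating $A \maj A'$ into a compatibility condition on $(U,V)$ and verifying that it can be arranged will constitute the main technical effort.
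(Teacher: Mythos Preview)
Your forward implication and the reduction to positive $A,T$ match the paper, and your intermediate element $A'$ is constructed correctly: indeed $\mu_s(A') \leq \mu_s(T)$ and $A \maj A'$, so Proposition~\ref{prop:dominance} yields $S_0 \in \N(T)$ with $E_\A(S_0) = A'$. The gap is exactly where you locate it, and your sketch for closing it does not work.

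Once you pass through $A'$, upgrading $S_0$ to some $S \in \N(T)$ with $E_\A(S) = A$ is the original theorem again: you are asking for an element of $\N(S_0) = \N(T)$ whose expectation is $A$, and the only information surviving is $A \submaj T$. Concretely, your proposal to take $S = U T_0$ with $T_0 \in \A$ equidistributed with $T$ forces $E_\A(U) = A T_0^{-1}$; but $A T_0^{-1}$ need not be a contraction, so no unitary $U$ can have it as expectation. (Take $A = 1_\M$ and let $T$ have singular-value function $2$ on $[0,\tfrac12)$ and $\tfrac12$ on $[\tfrac12,1)$: then $A \submaj T$, yet $\|A T_0^{-1}\| = 2$.) Allowing a right unitary and writing $S = U T_0 V$ does not help either: every element of $\N(T)$ has this form, so the constraint $E_\A(U T_0 V) = A$ is literally the statement to be proved.

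The paper avoids this circularity by decomposing \emph{before} invoking either Proposition~\ref{prop:dominance} or Theorem~\ref{thm:ravichandran}. It partitions $[0,1)$ into a Borel set $\Z$ and its complement so that on the complement the pointwise inequality $\mu_s(A) \leq \mu_s(T)$ already holds, while on $\Z$ one has genuine majorization $A_1 \maj T_1'$ between the corresponding compressions. This is arranged by setting $\X = \{s : \mu_s(T) \leq \mu_s(A)\}$, $\Y = [0,1)\setminus \X$, choosing $t_0$ with
\[
\int_{\X}(\mu_s(T)-\mu_s(A))\,ds + \int_{\Y\cap[0,t_0)}(\mu_s(T)-\mu_s(A))\,ds = 0,
\]
and taking $\Z = \X \cup (\Y\cap[0,t_0))$. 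Because the two pieces live on orthogonal corners $P\M P$ and $P^\perp \M P^\perp$ (via $P = e_A(\Z)$ after conjugating $e_T(\Z)$ onto it), the outputs of Theorem~\ref{thm:ravichandran} and Proposition~\ref{prop:dominance} combine as a direct sum $S = S_1 \oplus S_2 \in \N(T)$ with $E_\A(S) = A$. The idea you are missing is this spatial splitting, which lets Schur--Horn and dominance act in parallel on disjoint summands rather than sequentially on all of $\M$.
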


\begin{proof}
If there exists an $S \in \N(T)$ such that $E_\A(S) = A$, then $A \submaj S$ by Theorem \ref{thm:expectation-submajorizes}.  Since, by Theorem \ref{thm:two-sided-unitary-orbit}, $\mu_s(S) = \mu_s(T)$ for all $s \in [0,1)$, we obtain that $A \submaj T$ so one direction is complete.

For the other direction, note we may assume that $A$ and $T$ are positive by the discussion at the end of Section \ref{sec:main}. Let 
\[
\X = \{ s \in [0,1) \mid \mu_s(T) - \mu_s(A) \leq 0 \},
\]
and let $\Y = [0,1) \setminus \X$. Define the function
\[
f(t) := \int_{\X} (\mu_s(T) - \mu_s(A))\, ds + \int_{\Y\cap[0,t)} (\mu_s(T) - \mu_s(A))\, ds, \quad t \in [0,1].
\]
Notice that $f$ is continuous on $[0,1]$, $f(0) \leq 0$ by construction, and $f(1) \geq 0$ since $A \submaj T$. Hence there is $t_0 \in [0,1]$ such that $f(t_0) = 0$. Let $\Z = \X \cup (\Y \cap [0,t_0))$, and note that $\Z = [0,t_0) \cup (\X \setminus [0,t_0))$.

Note if $m(\Z) = 0$, then $m(\X) = 0$, and thus the result is complete by Proposition \ref{prop:dominance}.  Thus we may assume that $m(\Z) \neq 0$.

Let $P = e_A(\Z)$, $Q = e_T(\Z)$, $A_1 = AP$, $A_2 = AP^\perp$, $T_1 = TQ$, and $T_2 = TQ^\perp$. Note that, by construction and Lemma \ref{lem:dominance}, we have $\mu_s(A_2) \leq \mu_s(T_2)$ for every $s \in [0,1)$.  

Since $\tau(P) = \tau(Q)$, there is a unitary $U \in \M$ that conjugates $Q$ onto $P$. Let $T' = UTU^*$, let $T'_1 = T'P$, and let $T'_2 = T'P^\perp$. Thus $\mu_s(T'_1) = \mu_s(T_1)$ and $\mu_s(A_2) \leq \mu_s(T_2) = \mu_s(T'_2)$ for every $s \in [0,1)$.

We claim $A_1 \maj T'_1$. To see this, we consider two possibilities for $t \in [0,1]$. If $t \in [0, t_0/m(\Z))$, then applying Lemma \ref{lem:singular-numbers-3} combined with the fact that  $A \submaj T$ implies
\[
\int_0^t (\mu_s(T'_1) - \mu_s(A_1))\, ds = \frac{1}{m(\Z)}\int_0^{m(\Z)t} (\mu_s(T) - \mu_s(A))\, ds \geq 0.
\]
Otherwise, if $t \in [t_0/m(\Z), 1]$ then applying Lemma \ref{lem:singular-numbers-3} again combined with the fact that $\mu_s(T) - \mu_s(A) \leq 0$ on $\X$ implies
\begin{align*}
\int_0^t  (\mu_s(T'_1) - \mu_s(A_1))\, ds 
&\geq \frac{1}{m(\Z)}\int_0^{t_0} (\mu_s(T) - \mu_s(A))\, ds \\
&\quad + \frac{1}{m(\Z)}\int_{\X \setminus [0,t_0)} (\mu_s(T) - \mu_s(A))\, ds \\
&= \frac{1}{m(\Z)}f(t_0) \\
&=0,
\end{align*}
with equality when $t = 1$.  Hence $A_1 \maj T'_1$.

We can now apply Theorem \ref{thm:ravichandran} to $A_1$ and $T'_1$, considered as elements of $P \M P$, to obtain $S_1 \in \O(T'_1)$ such that $E_{\A P}(S_1) = A_1$. We can also apply Proposition \ref{prop:dominance} to $A_2$ and $T_2$, considered as elements of $P^\perp \M P^\perp$ to obtain $S_2 \in \N(T'_2)$ such that $E_{\A P^\perp}(S_2) = A_2$. Taking $S = S_1 \oplus S_2 \in \M$, it follows that $S \in \N(T)$ and $E_{\A}(S) = A$.
\end{proof}

\begin{rem}
Let $A$ and $T$ be as in the statement of Theorem \ref{thm:thompson-intro}. If the singular value functions of $A$ and $T$ each take only a finite number of values, then in the proof of Theorem \ref{thm:thompson-intro} we can apply Remark \ref{rem:precise-unitaries-in-dominance} in addition to Proposition \ref{prop:dominance}, and apply Bhat and Ravichandran's Schur-Horn theorem for self-adjoint elements with finite spectrum \cite{BR2011}*{Theorem 3.1} instead of Ravichandran's Schur-Horn theorem, to obtain unitaries $U,V \in \U(\M)$ such that $E_{\A}(UTV) = A$.  Alternatively, it is elementary to verify that $\{UTV \mid U, V \in \U(\M)\}$ is closed when the singular value function of $T$ takes only a finite number of values, and thus Theorem \ref{thm:thompson-intro} implies there exists unitaries $U,V \in \U(\M)$ such that $E_{\A}(UTV) = A$. 
\end{rem}
\section{Thompson's theorem and the Schur-Horn theorem} \label{sec:relationship}
The proof of Thompson's theorem for \II1 factors given in the previous section depends on Ravichandran's Schur-Horn theorem for \II1 factors. However, in this section we will prove that, logically, Thompson's theorem for \II1 factors implies Ravichandran's Schur-Horn theorem.

We are grateful to David Sherman for showing us a simple proof of the following lemma.

\begin{lem} \label{lem:sherman}
Let $\M$ be a \II1 factor with trace $\tau$ and let $S,T \in \M$. If $T$ is positive, $\mu_s(S) = \mu_s(T)$ for every $s \in [0,1)$, and $\tau(S) = \tau(T)$, then $S$ is positive.
\end{lem}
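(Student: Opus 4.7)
The plan is to pass through the polar decomposition $S = U|S|$ and squeeze a positivity conclusion out of the equality case of the Cauchy--Schwarz inequality on $L^2(\M,\tau)$.

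First I would collect all the numerical data the hypotheses provide. Since $T \geq 0$, we have $\mu_s(T) = \lambda_s(T)$ and so, combining the assumed identity $\mu_s(S) = \mu_s(T)$ with Lemma~\ref{lem:singular-numbers-3} at $t = 1$,
\[
\tau(|S|) = \int_0^1 \mu_s(S)\, ds = \int_0^1 \mu_s(T)\, ds = \tau(T) = \tau(S).
\]
In particular, $\tau(S) \geq 0$ and $\tau(|S|) = \tau(S)$. If $\tau(|S|) = 0$ then $|S| = 0$ by faithfulness of $\tau$, hence $S = 0$ is positive and we are done; so assume $\tau(|S|) > 0$.

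Next I would run the Cauchy--Schwarz step. Write $S = U|S|$ in polar form with $U$ a partial isometry; since $\M$ is a \II1 factor, the kernel projections of $S$ and of $|S|$ have equal trace and are Murray--von Neumann equivalent, so $U$ extends to a unitary in $\M$. Writing the trace as an inner product on $L^2(\M, \tau)$,
\[
\tau(S) = \tau(U|S|^{1/2}\cdot |S|^{1/2}) = \langle U|S|^{1/2},\, |S|^{1/2}\rangle,
\]
while $\|U|S|^{1/2}\|_2 = \bigl\| |S|^{1/2}\bigr\|_2 = \tau(|S|)^{1/2}$. The Cauchy--Schwarz inequality therefore gives $|\tau(S)|^2 \leq \tau(|S|)^2$, and equality is forced by the first step.

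Finally I would analyse this equality. The equality case of Cauchy--Schwarz forces $U|S|^{1/2} = \lambda\, |S|^{1/2}$ for some $\lambda \in \bC$; taking the trace of $S = U|S|$ yields $\tau(S) = \lambda \tau(|S|)$, and since $\tau(|S|) = \tau(S) > 0$ we conclude $\lambda = 1$. Hence $U|S|^{1/2} = |S|^{1/2}$, and right-multiplying by $|S|^{1/2}$ gives $S = U|S| = |S|$, which is positive. I do not foresee any substantial obstacle; the heart of the argument is simply the observation that matching singular value functions together with matching traces pin the pair $(S, |S|)$ to exactly the equality case of Cauchy--Schwarz in $L^2(\M, \tau)$.
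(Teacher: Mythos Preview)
Your proof is correct and follows essentially the same approach as the paper's: both establish $\tau(|S|) = \tau(S)$ from the singular value hypothesis and then force the equality case of Cauchy--Schwarz applied to $\tau(S) = \tau(U|S|^{1/2}\,|S|^{1/2})$ to conclude $S = |S|$. Your version is slightly more explicit (separating out the case $\tau(|S|)=0$ and justifying the extension of $U$ to a unitary), but the argument is the same.
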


\begin{proof}
Let $U$ be a unitary obtained from a polar decomposition of $S$, so that $S = U|S|$. Then
\[
\tau(|S|) = \int_0^1 \mu_s(|S|)\, ds = \int_0^1 \mu_s(S)\, ds = \int_0^1 \mu_s(T) \, ds = \tau(T) = \tau(S).
\]
Thus, by the Cauchy-Schwarz inequality,
\[
\tau(S) = \tau(|S|) = \tau(U|S|^{1/2} |S|^{1/2}) \leq \tau(|S|)^{1/2} \tau(|S|)^{1/2} = \tau(|S|).
\]
Since this inequality is actually an equality, it follows that $U|S|^{1/2}$ is a complex scalar multiple of $|S|^{1/2}$, and hence that $S$ is a complex scalar multiple of $|S|$. Since $\tau(S) = \tau(|S|)$, this implies that $S = |S|$.
\end{proof}

\begin{thm}
Thompson's theorem for \II1 factors implies Ravichandran's Schur-Horn theorem for \II1 factors.
\end{thm}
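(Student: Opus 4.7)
The plan is to reduce Ravichandran's Schur--Horn theorem to Thompson's theorem by shifting everything by a large scalar to make operators positive, and then using Lemma~\ref{lem:sherman} to convert a two-sided orbit conclusion into a one-sided orbit conclusion.

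Suppose we are given self-adjoint $A \in \A$ and self-adjoint $T \in \M$ with $A \maj T$; we must produce $S \in \O(T)$ with $E_{\A}(S) = A$. Set $c = \|T\|$, and consider the positive elements $A' = A + c 1_{\M} \in \A$ and $T' = T + c 1_{\M} \in \M$. Since shifting by a scalar shifts eigenvalue functions by the same constant, $A \maj T$ immediately implies $A' \submaj T'$, with the additional property that $\tau(A') = \tau(T')$ (which is inherited from $A \maj T$).

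Apply Theorem~\ref{thm:thompson-intro} (Thompson's theorem) to $A'$ and $T'$ to obtain $S' \in \N(T')$ with $E_{\A}(S') = A'$. By Theorem~\ref{thm:two-sided-unitary-orbit}, $\mu_s(S') = \mu_s(T')$ for all $s \in [0,1)$. Moreover,
\[
\tau(S') = \tau(E_{\A}(S')) = \tau(A') = \tau(T'),
\]
so the hypotheses of Lemma~\ref{lem:sherman} are satisfied, and we conclude that $S'$ is positive.

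Since $S'$ is positive, $\lambda_s(S') = \mu_s(S') = \mu_s(T') = \lambda_s(T')$ for all $s \in [0,1)$, so by Remark~\ref{rem:equidistributed-equality-of-singular-values}, $S'$ and $T'$ are equi-distributed self-adjoint elements. Kamei's theorem (cited just before Theorem~\ref{thm:two-sided-unitary-orbit}) then yields $S' \in \O(T')$; that is, there exist unitaries $U_n \in \U(\M)$ with $U_n T' U_n^* \to S'$. Setting $S = S' - c 1_{\M}$, we have $U_n T U_n^* = U_n T' U_n^* - c 1_{\M} \to S$, so $S \in \O(T)$, and by linearity of $E_{\A}$,
\[
E_{\A}(S) = E_{\A}(S') - c 1_{\M} = A' - c 1_{\M} = A,
\]
as required. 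The main subtlety, and the reason this reduction works at all, is Lemma~\ref{lem:sherman}: Thompson's theorem by itself only gives the singular value function of $S'$, but positivity of the \emph{target} operator $T'$ combined with the trace identity forces $S'$ to be positive, thereby promoting the conclusion from the two-sided orbit $\N(T')$ to the one-sided orbit $\O(T')$.
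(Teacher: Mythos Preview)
Your proof is correct and follows essentially the same route as the paper's: translate to make $A$ and $T$ positive, apply Thompson's theorem to get $S \in \N(T)$ with $E_{\A}(S) = A$, use the trace-preserving property of $E_{\A}$ together with Lemma~\ref{lem:sherman} to conclude $S$ is positive, and then invoke equi-distribution (Kamei/Arveson--Kadison) to upgrade $S \in \N(T)$ to $S \in \O(T)$. The only differences are cosmetic---you carry the scalar shift explicitly via primed operators and undo it at the end, while the paper simply declares ``we may assume $A$ and $T$ are positive''---and you cite Kamei where the paper cites \cite{AK2006}*{Theorem 5.4}, which is the same result.
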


\begin{proof}
Let $\M$ be a \II1 factor with trace $\tau$, let $\A$ be a MASA in $\M$, and let $E_{\A} : \M \to \A$ denote the normal conditional expectation onto $\A$. Let $A \in \A$ and $T \in \M$ be self-adjoint elements satisfying $A \maj T$.  As the conclusions of the Schur-Horn theorem are invariant under translations by real-valued scalars, we may assume that $A$ and $T$ are positive.  By Thompson's theorem for \II1 factors (Theorem \ref{thm:thompson-intro}), there is $S \in \N(T)$ such that $E_{\A}(S) = A$. The result will follow if we can show that $S \in \O(T)$.

Note that $\mu_s(S) = \mu_s(T)$ for every $s \in [0,1)$. Also, $\tau(T) = \tau(A) = \tau(S)$, since $E_{\A}$ preserves the trace. Hence by Lemma \ref{lem:sherman}, $S$ is positive and it follows from Remark \ref{rem:equidistributed-equality-of-singular-values} and \cite{AK2006}*{Theorem 5.4} that $S \in \O(T)$.
\end{proof}

\end{document}